\newtheorem{theorem}{Theorem}[section]
\newtheorem{lemma}{Lemma}[section]
\newtheorem{remark}{Remark}[section]
\newtheorem{corollary}{Corollary}[section]
\numberwithin{equation}{section}
\newtheorem*{theorem*}{Theorem}
\newtheorem{definition}{Definition}[section]
\newcommand{\R}{\mathbb{R}}
\newcommand{\Schw}{{\mathcal S}}
\newcommand{\TD}{{\mathcal S'}}
\newcommand{\wh}{\widehat}
\newcommand{\F}{{\mathcal F}}
\newcommand{\be}{\begin{equation}}
\newcommand{\ee}{\end{equation}}
\begin{document}

\subjclass[2010]{Primary 44A35, 42A85, 26D15. Secondary 47G10, 26A99.}

\title[Weighted convolution inequalities for radial functions]{Weighted convolution inequalities for radial functions}

\author{Pablo L.  De N\'apoli}
\address{IMAS (UBA-CONICET) and Departamento de Matem\'atica, Facultad de Ciencias Exactas y Naturales, Universidad de Buenos Aires, Ciudad Universitaria, 1428 Buenos Aires, Argentina}
\email{pdenapo@dm.uba.ar}

\author{Irene Drelichman}
\address{IMAS (UBA-CONICET) and Departamento de Matem\'atica, Facultad de Ciencias Exactas y Naturales, Universidad de Buenos Aires, Ciudad Universitaria, 1428 Buenos Aires, Argentina}
\email{irene@drelichman.com}

\thanks{Supported by ANPCyT under grant PICT 1675/2010, by CONICET under  grant PIP 1420090100230 and by Universidad de
Buenos Aires under grant 20020090100067. The authors are members of
CONICET, Argentina.}

\begin{abstract}
We obtain convolution inequalities in Lebesgue and Lorentz spaces with power weights when the functions involved are assumed to be radially symmetric. We also present applications of these results to  inequalities for Riesz potentials of radial functions in weighted Lorentz spaces and embedding theorems for radial Besov spaces with power weights. 
\keywords{Convolution \and Young's inequality \and radial functions \and Riesz potentials \and fractional integrals \and weighted Besov spaces.}

\end{abstract}

\maketitle

\section{Introduction}
The aim of this paper is to study boundedness properties of  the convolution operator
$$
(f*g)(x) = \int_{\mathbb{R}^n} f(x-y)g(y) \, dy
$$ 
in Lebesgue and Lorentz spaces with power weights, when restricted to radially symmetric functions. 

To state our results, first we need to introduce some notation. Given a measurable function $f$ in $\mathbb{R}^n$, we denote its distribution function with respect to the weight $w(x)=|x|^{\alpha p}$ by
$$
\mu_f (s) = \int_{\{ x : |f(x)|>s\}} |x|^{\alpha p} \, dx , \quad s>0.
$$

The weighted Lorentz space $L(p,q; \alpha)$ is the space of all measurable functions in $\mathbb{R}^n$ such that $\|f\|_{p,q;\alpha}$ is finite, with
$$
\|f\|_{p,q;\alpha} = \left( q \int_0^\infty s^{q-1} \mu_f(s)^{\frac{q}{p}} \, ds \right)^{\frac{1}{q}}, \quad  1<p<\infty, 1\le q<\infty, 
$$
$$
\|f\|_{p,\infty; \alpha} = \sup_{s>0} s \mu_f(s)^{\frac{1}{p}} , \quad  1\le p < \infty.
$$

When $p=q$, we recover the weighted Lebesgue space $L(p,p;\alpha)=L(p;\alpha)$ with
$$
\|f\|_{p,\alpha} = \left( \int_{\mathbb{R}^n} |f(x)|^p |x|^{\alpha p} \, dx \right)^{\frac{1}{p}}, \quad1\le p<\infty 
$$
$$
\|f\|_{\infty;Ê\alpha} ={ \mathrm{ess} \sup}_{x\in \mathbb{R}^n} |f(x) w(x)|.
$$
When $\alpha=0$ we simply write $L^p$. Finally, by $L_{rad}(p,q;\alpha)$,  $L_{rad}(p, \alpha)$ or $L^p_{rad}$ we denote the subspaces of radial functions of the corresponding spaces. 

Following \cite{K}, given functional spaces $X, Y, Z$, we shall write $X * Y \subset Z$ to indicate that for functions $f \in X, g\in Y$, then $f*g \in Z$ and there exists a positive constant $C$ such that
$$
\|f*g\|_{Z} \le C \|f\|_X \|g\|_Y.
$$
Then, the classical Young's inequality reads 

\begin{theorem}(Young's inequality)
$$L_p * L_q \subset L_r$$
for $1\le p,q,r \le \infty$, provided that $\frac{1}{r} = \frac{1}{p}+ \frac{1}{q} -1.$ \end{theorem}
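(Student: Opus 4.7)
The plan is to handle the degenerate endpoints directly and then treat the general case by a single application of H\"older's inequality with three factors (an equivalent route would be Riesz--Thorin or Marcinkiewicz interpolation between the endpoints).

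The endpoints are straightforward. When $r=\infty$ the hypothesis forces $1/p+1/q=1$, so for each fixed $x$ the ordinary H\"older inequality applied to $y\mapsto f(x-y)g(y)$ gives the pointwise bound $|(f*g)(x)|\le \|f\|_p\|g\|_q$. When $p=1$, so that $q=r$, Minkowski's integral inequality (or Fubini together with a translation-invariance argument inside the $L^q$ norm) yields $\|f*g\|_q\le \|f\|_1\|g\|_q$; the case $q=1$ is symmetric.

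For the interior case $1<p,q,r<\infty$, the main step is the factorization
\[
|f(x-y)\,g(y)| \;=\; \bigl[|f(x-y)|^p|g(y)|^q\bigr]^{1/r}\cdot |f(x-y)|^{1-p/r}\cdot |g(y)|^{1-q/r},
\]
followed by H\"older's inequality with the three exponents $r$, $s=pr/(r-p)$, and $t=qr/(r-q)$. The key algebraic verification is
\[
\frac{1}{r}+\frac{1}{s}+\frac{1}{t} \;=\; \frac{1}{r}+\frac{r-p}{pr}+\frac{r-q}{qr} \;=\; \frac{1}{p}+\frac{1}{q}-\frac{1}{r} \;=\; 1,
\]
which is exactly the scaling relation in the hypothesis. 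This gives the pointwise estimate
\[
|(f*g)(x)| \;\le\; \Bigl(\int_{\R^n} |f(x-y)|^p|g(y)|^q\,dy\Bigr)^{1/r} \|f\|_p^{\,1-p/r}\,\|g\|_q^{\,1-q/r}.
\]
Raising to the $r$-th power, integrating in $x$, and applying Fubini--Tonelli to the double integral $\int\!\int |f(x-y)|^p|g(y)|^q\,dy\,dx = \|f\|_p^p\|g\|_q^q$, the exponents recombine to produce $\|f*g\|_r^{\,r}\le \|f\|_p^{\,r}\|g\|_q^{\,r}$, which is the desired inequality.

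The only real obstacle is identifying the correct three-factor splitting; once the exponents $s$ and $t$ are chosen so that $1/r+1/s+1/t=1$ and the powers of $|f|$ and $|g|$ inside the three factors add up correctly, the rest is bookkeeping. Since this is the classical result of Young and merely serves as a reference point for the weighted/radial refinements that follow, I do not anticipate any substantive technical difficulty.
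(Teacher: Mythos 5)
Your proof is correct: it is the standard three-exponent H\"older argument for Young's inequality, and the exponent bookkeeping (including the fact that the scaling relation $\frac{1}{r}=\frac{1}{p}+\frac{1}{q}-1$ forces $r>p$ and $r>q$ in the interior case, so that $s$ and $t$ are legitimate H\"older exponents) checks out. The paper states this classical theorem without proof, purely as background for its weighted radial refinements, so there is no argument in the paper to compare against; your treatment of the endpoints and the interior case is complete.
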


In Lorentz spaces the result is due to O'Neil \cite{O}:
\begin{theorem}
$$L(p_0,q_0) * L(p_1,q_1) \subset L(p,q)$$
for $1< p_0, p_1, p< \infty$, provided that $\frac{1}{p} = \frac{1}{p_0} + \frac{1}{p_1} -1$ and $0\le \frac{1}{q} \le \frac{1}{q_0} + \frac{1}{q_1} \le 1$. \end{theorem}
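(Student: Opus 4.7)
The plan is to follow O'Neil's original approach \cite{O}, based on a pointwise rearrangement estimate for the convolution. First, since $|f*g|\le |f|*|g|$, we may assume $f,g\ge 0$. The central tool is the rearrangement inequality
\[
(f*g)^{**}(t) \le t\,f^{**}(t)\,g^{**}(t) + \int_t^\infty f^*(s)\,g^*(s)\,ds,
\]
where $h^*$ denotes the decreasing rearrangement on $(0,\infty)$ and $h^{**}(t)=t^{-1}\int_0^t h^*(s)\,ds$. I would establish this by splitting $g = g_1 + g_2$ with $g_1 = g\,\chi_{\{|g|>g^*(t)\}}$, bounding $\|f*g_1\|_\infty$ via the Hardy--Littlewood rearrangement inequality, and controlling $\|f*g_2\|_1$ (via Fubini and $g_2^*\le g^*(t)$) to get $(f*g_2)^{**}(t)\le t^{-1}\|f*g_2\|_1$.

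Next, since $p>1$ the Lorentz (quasi-)norm admits the equivalent description
\[
\|f*g\|_{p,q}\;\simeq\;\Bigl\|\,t^{1/p}(f*g)^{**}(t)\Bigr\|_{L^q(dt/t)},
\]
and similarly for $\|f\|_{p_i,q_i}$ with $i=0,1$. Substituting the pointwise bound splits the norm into two pieces. For the first, $t^{1/p+1} f^{**}(t) g^{**}(t) = t^{1/p_0} f^{**}(t)\cdot t^{1/p_1} g^{**}(t)$ (using $1/p+1 = 1/p_0+1/p_1$), I apply H\"older's inequality in $L^q(dt/t)$ with exponents $q_0', q_1'$ satisfying $1/q = 1/q_0'+1/q_1'$ and $q_i'\ge q_i$, and then use the Lorentz embedding $L(p_i,q_i)\hookrightarrow L(p_i,q_i')$; the existence of such exponents is precisely the hypothesis $1/q \le 1/q_0+1/q_1$.

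The tail term $t^{1/p}\int_t^\infty f^*(s) g^*(s)\,ds$ is the main obstacle. Here I would invoke Hardy's inequality on $(0,\infty)$ with an appropriate power weight---applicable because $1/p>0$, i.e.\ $p<\infty$---to trade the outer integral for a bound on $s^{1/p+1} f^*(s) g^*(s)$ in $L^q(ds/s)$, which then reduces to the same H\"older/embedding estimate as in the previous paragraph. The constraint $1/q_0+1/q_1\le 1$ guarantees that the resulting exponents remain in the admissible range and that the limiting Young-type estimate $L^1*L^1\subset L^1$ is respected.

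The principal technical difficulty is the rearrangement inequality of the first step, which is a genuine geometric lemma rather than a formal manipulation, and in particular relies on careful level-set decompositions; once it is in hand, the remainder of the argument amounts to a careful bookkeeping of H\"older's and Hardy's inequalities tailored to the Lorentz structure, with the hypotheses on $p,q,p_i,q_i$ entering exactly at the points indicated.
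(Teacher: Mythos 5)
The paper does not prove this statement at all---it is quoted verbatim from O'Neil \cite{O} as background---so there is no internal proof to compare against; your proposal reconstructs O'Neil's original argument, which is indeed the standard route. The reduction of the theorem to the rearrangement inequality is sound: the equivalence $\|h\|_{p,q}\simeq\|t^{1/p}h^{**}(t)\|_{L^q(dt/t)}$ for $p>1$, the factorization $t^{1/p+1}f^{**}g^{**}=t^{1/p_0}f^{**}\cdot t^{1/p_1}g^{**}$, H\"older in $L^q(dt/t)$ with auxiliary exponents $r_i\ge q_i$ satisfying $1/r_0+1/r_1=1/q$ (which exist exactly when $1/q\le 1/q_0+1/q_1$), the embedding $L(p_i,q_i)\hookrightarrow L(p_i,r_i)$, and Hardy's inequality for $\int_t^\infty$ with weight $t^{1/p}$, $1/p>0$, $q\ge 1$ (guaranteed by $1/q\le 1/q_0+1/q_1\le 1$) are all correctly deployed.

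The one genuine flaw is in your sketch of the proof of the rearrangement inequality itself. Splitting $g=g_1+g_2$ with $g_1=g\chi_{\{|g|>g^*(t)\}}$ and controlling $\|f*g_2\|_1$ cannot work: $\|f*g_2\|_1\le\|f\|_1\|g_2\|_1$ requires $f\in L^1$, which is not available for $f\in L(p_0,q_0)$ with $p_0>1$. Moreover, the other half also misfires: Hardy--Littlewood gives $\|f*g_1\|_\infty\le\int_0^t f^*g^*\,ds$, and by Chebyshev's sum inequality for decreasing functions this dominates $t\,f^{**}(t)g^{**}(t)$ rather than being dominated by it. The correct decomposition (O'Neil; see also Bennett--Sharpley) truncates one factor in \emph{height} rather than restricting it to a level set: write $f=f_1+f_2$ with $f_1=(|f|-f^*(t))_+\,\mathrm{sgn}\,f$ and $f_2=f-f_1$, so that $\|f_1\|_1=t\bigl(f^{**}(t)-f^*(t)\bigr)<\infty$ and $\|f_2\|_\infty\le f^*(t)$; then $(f_1*g)^{**}(t)\le\|f_1\|_1\,g^{**}(t)$, while $(f_2*g)^{**}(t)\le\|f_2*g\|_\infty\le\int_0^\infty f_2^*(s)g^*(s)\,ds=t\,f^*(t)g^{**}(t)+\int_t^\infty f^*g^*\,ds$, and summing yields the stated bound. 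With that substitution your argument closes.
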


 In the case  of power weights, the above theorems were generalized by R. Kerman \cite{K}. Partial results for the $L^p$ case can also be found in \cite{B}.
\begin{theorem}\cite[Theorem 3.1]{K}
\label{teo-kerman}
$$
L (p ; \alpha) * L (q; \beta) \subset  L (r; - \gamma)
$$
provided
\begin{enumerate}
\item $\frac{1}{r} = \frac{1}{p} + \frac{1}{q} + \frac{\alpha + \beta + \gamma}{n} -1, 1<p,q,r<\infty  , \frac{1}{r} \le \frac{1}{p} + \frac{1}{q},$
\item $ \alpha<\frac{n}{p'} , \beta< \frac{n}{q'}  , \gamma < \frac{n}{r},$
\item $\alpha + \beta \ge0, \beta + \gamma \ge0 , \gamma + \alpha \ge 0.$
\end{enumerate}
\end{theorem}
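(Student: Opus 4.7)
The plan is to follow the classical Kerman strategy: decompose the convolution into three pieces according to the relative magnitudes of $|x|$, $|y|$, and $|x-y|$, redistribute the power weight $|x|^{-\gamma}$ onto a factor inside each resulting integral, and then apply the unweighted O'Neil inequality (Theorem~2) after placing the transformed functions in suitable unweighted Lorentz spaces by means of H\"older's inequality in Lorentz spaces.

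I would write $(f*g)(x) = I_1(x) + I_2(x) + I_3(x)$, where the three integrals run over $\{|y| \le |x|/2\}$, $\{|y| \ge 2|x|\}$, and $\{|x|/2 < |y| < 2|x|\}$, respectively. In the first region $|x-y| \sim |x|$, in the second $|x-y| \sim |y|$, and in the third $|y| \sim |x|$. This comparability lets one rewrite $|x|^{-\gamma}$ as a comparable power of either $|x-y|$ or $|y|$ and absorb it into one of the factors of the convolution. For example, on $I_1$ one obtains the pointwise bound
$$
|x|^{-\gamma}|I_1(x)| \lesssim \bigl( (|\cdot|^{-\gamma}|f|) * |g| \bigr)(x),
$$
an unweighted convolution; the piece $I_3$ is treated symmetrically using $|x|^{-\gamma}\sim|y|^{-\gamma}$ to move the weight onto $g$, while on $I_2$ the relation $|x-y|^{-\alpha}\sim|y|^{-\alpha}$ is first used to consolidate the $|y|$-weights before $|x|^{-\gamma}$ is absorbed.

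To estimate each of these unweighted convolutions I would apply Theorem~2, first sending the individual factors into unweighted Lorentz spaces via H\"older's inequality in Lorentz spaces: writing
$$
|z|^{-\gamma}f(z) = |z|^{-(\alpha+\gamma)} \cdot \bigl(|z|^\alpha f(z)\bigr),
$$
with $|z|^\alpha f \in L^p$ by assumption and $|z|^{-(\alpha+\gamma)} \in L^{n/(\alpha+\gamma),\infty}$ (nondegenerate precisely because of the pairwise nonnegativity in (3)), one gets $|\cdot|^{-\gamma}f \in L^{s,p}$ with $1/s = 1/p + (\alpha+\gamma)/n$, and analogously $g \in L^{t,q}$ with $1/t = 1/q + \beta/n$. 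Theorem~2 then yields the desired bound on $|x|^{-\gamma}I_1$ in $L^r$, with the final exponent matched exactly by the scaling identity~(1); the pieces $I_2$ and $I_3$ are handled by the same scheme after a cyclic relabelling that invokes the remaining pairwise sums of~(3).

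The main difficulty is bookkeeping: one must verify that in every region the correct pairwise nonnegativity from~(3) is used, that the strict inequalities in~(2) keep every intermediate Lorentz exponent finite, and that~(1) is precisely the scaling identity that makes the exponents match. The cases where some pairwise sum vanishes (so that the corresponding weight is just the constant $1$ and the H\"older step is trivial) or exceeds $n$ (so that $|z|^{-(\alpha+\gamma)}$ must first be split as $|z|^{-s_1}|z|^{-s_2}$ with each $s_i<n$ before the H\"older-Lorentz step) are exceptional and handled by direct arguments or by iterating the scheme above.
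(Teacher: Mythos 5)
This theorem is quoted from Kerman \cite{K} and the paper does not reprove it; Kerman's argument (which is also the template the paper follows for Theorem \ref{teo-radiales}) runs through duality to the endpoint $r=1$, the Stein--Weiss inequality for the Riesz potential, and Calder\'on multilinear interpolation against H\"older's inequality at $r=\infty$. Your three-region decomposition is a genuinely different route, but as sketched it has two gaps that are not mere bookkeeping. The first is the region $\{|y|\ge 2|x|\}$: there $|x|$ is \emph{dominated by}, not comparable to, both $|y|$ and $|x-y|$, so when $\gamma>0$ the weight $|x|^{-\gamma}$ is large exactly where $|x|$ is small and the inequality needed to absorb it points the wrong way ($|y|\ge 2|x|$ gives $|x|^{-\gamma}\ge 2^{\gamma}|y|^{-\gamma}$). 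No pointwise redistribution of $|x|^{-\gamma}$ onto the integrand can work in that region for $\gamma>0$; one must pass to the symmetric trilinear form $\int\!\!\int f(u)\,g(v)\,h(u+v)\,du\,dv$ and permute the three slots (equivalently, dualize and treat this piece as a Riesz-potential/Hardy-type operator). That permutation is the real content of your ``cyclic relabelling,'' and it is a missing idea rather than a relabelling of the convolution.

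The second gap is that the H\"older--Lorentz step needs the redistributed exponents to be individually nonnegative and the intermediate Lorentz indices to lie in $(1,\infty)$, and neither follows from the hypotheses, which only constrain the pairwise sums. Concretely, take $n=2$, $p=q=2$, $\alpha=0.9$, $\beta=-0.5$, $\gamma=0.6$, so that $r=2$ and all of (1)--(3) hold: in your region $1$ you would need $|z|^{-\beta}=|z|^{1/2}\in L(n/\beta,\infty)$, which is meaningless for $\beta<0$, while $\tfrac1s=\tfrac1p+\tfrac{\alpha+\gamma}{n}=1.25>1$ puts the other factor outside the range where O'Neil's theorem applies. These are precisely the parameter ranges in which a single pointwise splitting provably cannot succeed and in which Kerman instead interpolates between the $r=1$ and $r=\infty$ endpoints; your escape clauses (``direct arguments,'' ``iterating the scheme'') do not identify a mechanism that closes either gap.
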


\begin{theorem}\cite[Theorem 4.1]{K}
\label{teo-kerman-lorentz}
$$
L (p_0, q_0 ; \alpha) * L (p_1, q_1 ; \beta) \subset  L (p, q ; - \gamma)
$$
provided
\begin{enumerate}
\item $\frac{1}{p}=\frac{1}{p_0}+\frac{1}{p_1} + \frac{\alpha+\beta+\gamma}{n}-1,  <p_0, p_1, p< \infty, \le \frac{1}{q}\le \frac{1}{q_0}+\frac{1}{q_1}\le 1,$
\item $\alpha<\frac{n}{p_0'} ,\beta< \frac{n}{p_1'} ,\gamma < \frac{n}{p},$
\item $\alpha + \beta  >0,  \beta + \gamma >0 , \gamma + \alpha > 0.$
\end{enumerate}
\end{theorem}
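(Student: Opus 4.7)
The plan is to deduce this Lorentz-space estimate by real interpolation from the Lebesgue-space version Theorem \ref{teo-kerman}. The signal that interpolation is the right tool is the replacement of $\ge 0$ by $>0$ in condition (3): these strict inequalities give the ``wiggle room'' needed to perturb the weight exponents $\alpha,\beta,\gamma$ slightly in both directions and still satisfy the hypotheses of Theorem \ref{teo-kerman}.

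Concretely, I would proceed in two steps. First, fix $g$ in a suitable weighted Lebesgue space and consider the linear operator $T_g f := f*g$. Choose two endpoint parameter sets $(p_0^{(0)}, p^{(0)}, \alpha^{(0)}, \gamma^{(0)})$ and $(p_0^{(1)}, p^{(1)}, \alpha^{(1)}, \gamma^{(1)})$ bracketing the target data $(p_0, p, \alpha, \gamma)$, with the scaling identity (1) preserved at each endpoint; the strictness of (3) for the target data ensures that (3) still holds at both endpoints provided the perturbations are small. Theorem \ref{teo-kerman} then yields strong-type boundedness at each endpoint, and Marcinkiewicz real interpolation upgrades this to $T_g \colon L(p_0, q_0; \alpha) \to L(p, q_0; -\gamma)$ with operator norm dominated by a weighted Lebesgue norm of $g$.

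Second, fix $f \in L(p_0, q_0; \alpha)$ and view $S_f g := f*g$ as a linear operator in $g$. A second real interpolation between the Lebesgue-in-$g$ estimates just obtained produces the desired bound with $g \in L(p_1, q_1; \beta)$ and target $L(p, q; -\gamma)$. The final summability relation $1/q \le 1/q_0 + 1/q_1$ drops out naturally from the two successive applications of the interpolation identity, while $q \ge 1$ reflects the usual range of validity of the real method in quasi-Banach Lorentz spaces.

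The main technical obstacle is the second interpolation step: interpolating a bilinear operator whose first factor already lies in a genuine Lorentz space (not Lebesgue), with a Lorentz target, is more delicate than classical Marcinkiewicz, because the level sets of $T_g f$ depend non-trivially on $g$. Kerman sidesteps this by establishing a restricted weak-type form of Theorem \ref{teo-kerman} (only characteristic functions of weighted balls as inputs) and then invoking a bilinear Stein--Weiss/Marcinkiewicz interpolation theorem. A secondary bookkeeping point is that the perturbed endpoint parameters must be chosen simultaneously consistent with the scaling identity (1); this reduces to a short linear-algebra verification, but must be written out explicitly to confirm that the ``wiggle room'' created by the strict inequalities in (3) really suffices.
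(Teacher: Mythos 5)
There is a genuine gap in the main route you propose. Iterated one--variable real interpolation cannot produce the stated range $0\le \frac1q\le\frac1{q_0}+\frac1{q_1}$. In your first step (freezing $g$) the target comes out as $L(p,q_0;-\gamma)$; in the second step you interpolate between two such targets with the $(\theta,q_1)$ functor, and since $\bigl(L^{a_0,q_0}(\nu),L^{a_1,q_0}(\nu)\bigr)_{\theta,q_1}=L^{a,q_1}(\nu)$ for $a_0\ne a_1$, the second Lorentz index of the output is $q_1$, not $(1/q_0+1/q_1)^{-1}$. Thus the scheme yields at best $\frac1q\le\frac1{q_1}$ (or, by doing the two steps in the other order, $\frac1q\le\max\{\frac1{q_0},\frac1{q_1}\}$), which is strictly weaker than the theorem: for $q_0=q_1=2$ the theorem allows $q=1$ while your argument only reaches $q\ge 2$. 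The additive gain in the second index is a genuinely bilinear phenomenon; it requires proving a restricted weak-type estimate on characteristic functions,
$$
\int_H(\chi_F*\chi_G)(x)\,|x|^{-\gamma p}\,dx\le C\Bigl(\int_F|x|^{\alpha p_0}dx\Bigr)^{1/p_0}\Bigl(\int_G|x|^{\beta p_1}dx\Bigr)^{1/p_1}\Bigl(\int_H|x|^{\gamma p}dx\Bigr)^{1/p'},
$$
and then invoking a bilinear Stein--Weiss/O'Neil-type interpolation theorem. This is exactly what Kerman does, and what the present paper reproduces for the radial analogue in the proof of Theorem \ref{teo-lorentz}; your proposal mentions this correct mechanism only as an aside, while asserting (incorrectly) that the summability relation ``drops out naturally'' from two successive linear interpolations.

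A secondary point: with the paper's conventions, $L(p,q;\alpha)$ is the Lorentz space over the measure $|x|^{\alpha p}\,dx$, which depends on $p$. The identity $\bigl(L^{r_0}(\mu),L^{r_1}(\mu)\bigr)_{\theta,q}=L^{p,q}(\mu)$ requires the \emph{same} measure at both endpoints, so your perturbed endpoints must satisfy $\alpha^{(0)}p_0^{(0)}=\alpha^{(1)}p_0^{(1)}=\alpha p_0$ (and similarly for the target), not merely be ``close'' to the central data; one must then re-solve the scaling identity under this constraint. This is repairable, but it is an additional constraint your sketch does not account for, and in any case it does not cure the $q$-index obstruction above.
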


Further weighted inequalities for convolutions  can be found in \cite{BS}, \cite{KS}, \cite{NT}, \cite{Ra}  (see also references therein). However, the fact that one can improve Theorems \ref{teo-kerman} and \ref{teo-kerman-lorentz} when the functions involved are assumed to be radial was seemingly overlooked, and is the object of the present paper. Namely, we will prove:

\begin{theorem}
\label{teo-radiales}
$$
L_{rad} (p ; \alpha) * L_{rad} (q; \beta) \subset  L_{rad} (r; - \gamma)
$$
provided
\begin{enumerate}
\item $\frac{1}{r} = \frac{1}{p} + \frac{1}{q}+ \frac{\alpha + \beta + \gamma}{n} -1 , 1<p,q,r<\infty ,  \frac{1}{r} \le \frac{1}{p} + \frac{1}{q},$
\item $\alpha<\frac{n}{p'} , \beta< \frac{n}{q'} , \gamma < \frac{n}{r},$
\item $\alpha + \beta \ge {(n-1)(1 - \frac{1}{p} - \frac{1}{q})} , \beta + \gamma \ge (n-1)(\frac{1}{r}- \frac{1}{q}), \gamma + \alpha \ge (n-1)(\frac{1}{r} - \frac{1}{p})$
\item  $\max\{\alpha, \beta , \gamma\} > 0$ or $\alpha=\beta=\gamma=0$.
\end{enumerate}
\end{theorem}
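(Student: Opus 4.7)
The plan is to reduce the $n$-dimensional radial convolution to a one-dimensional integral operator on $(0,\infty)$ and then prove the required weighted inequality for that 1D operator. Writing $f(x)=f_0(|x|)$, $g(y)=g_0(|y|)$ and $r=|x|$, polar coordinates combined with the change of variables $u=|re_1-\rho\omega|$ yield
$$
(f*g)(x)=\int_0^\infty g_0(\rho)\rho^{n-1}K_n(r,\rho)\,d\rho,
$$
with the spherical average
$$
K_n(r,\rho)=\frac{c_n}{(r\rho)^{n-2}}\int_{|r-\rho|}^{r+\rho} f_0(u)\,u\bigl[(u^2-(r-\rho)^2)((r+\rho)^2-u^2)\bigr]^{(n-3)/2}du.
$$
The prefactor $(r\rho)^{-(n-2)}$, which is absent in the non-radial setting, is what ultimately produces the improvement over Theorem~\ref{teo-kerman}.

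The 1D operator $g_0\mapsto h(r):=\int_0^\infty g_0(\rho)\rho^{n-1}K_n(r,\rho)\,d\rho$ is then estimated via a dyadic decomposition of $(0,\infty)^2$ into three regimes: $\rho\ll r$, $\rho\gg r$, and $\rho\sim r$. In the two unbalanced regimes the support of $K_n(r,\cdot)$ is concentrated near $u=\max(r,\rho)$ and $K_n(r,\rho)$ is bounded by $f_0(\max(r,\rho))$ times elementary factors, so Kerman's original weighted-Hardy argument on the half-line (with measure $r^{n-1}dr$) handles these contributions under the hypothesis $\alpha+\beta\ge 0$ already implied by (3). The sharpening occurs in the diagonal regime $\rho\sim r$: there the explicit formula for $K_n$, paired with a weighted Schur test using power test functions $\rho^a$, $r^b$, produces a saving of $(n-1)(1-1/p-1/q)$ that widens the admissible range of $\alpha+\beta$ to exactly the threshold stated in (3). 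The two remaining inequalities in (3) follow by duality: the trilinear form $\int(f*g)h$ is cyclically symmetric in $f$, $g$, and $h$, so a single argument applied in each of the three orderings yields all three conditions in (3).

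The main obstacle will be executing the weighted Schur test in the diagonal regime sharply. The kernel in the explicit formula for $K_n$ has nontrivial behavior at the endpoints $u=|r-\rho|$ and $u=r+\rho$ coming from the $(n-3)/2$ exponent (which degenerates when $n=2$ to a negative exponent, making the integral improper), and pairing these estimates with the correct power-weight test functions requires careful bookkeeping of exponents. Condition (4) is expected to appear as a technical byproduct of this step, where at least one strictly positive weight is needed for the Schur test to be non-degenerate; the fully unweighted radial case $\alpha=\beta=\gamma=0$ reduces to the classical Young inequality applied to radial functions and is treated separately.
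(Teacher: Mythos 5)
Your route is genuinely different from the paper's, and as written it has real gaps. The paper never touches the explicit one-dimensional kernel: it first reduces (Remark \ref{reduccion}) to the three cases where one pair-sum is nonpositive and the opposite weight is positive, proves the endpoint $r=1$ by writing $\int (f*g)|x|^{-\gamma}\,dx=\int (T_\gamma f)\,g(-x)\,dx$ and invoking the known weighted Stein--Weiss inequality for Riesz potentials of radial functions (Theorem \ref{teo-ddd}, quoted from \cite{DDD}), then obtains general $r$ by Calder\'on's bilinear complex interpolation between this endpoint and H\"older's inequality at $r=\infty$, and finally gets the remaining two cases by duality (Lemma \ref{lema-dualidad}). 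All of the ``radial gain'' is thus outsourced to Theorem \ref{teo-ddd}; your plan proposes to re-derive that gain by hand from the spherical-average kernel $K_n(r,\rho)$, which is closer in spirit to \cite{D} than to this paper.

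Two concrete problems. First, your claim that the unbalanced regimes are handled ``under the hypothesis $\alpha+\beta\ge 0$ already implied by (3)'' is false: condition (3) only gives $\alpha+\beta\ge (n-1)(1-\frac{1}{p}-\frac{1}{q})$, which is negative exactly when $\frac1p+\frac1q>1$, and permitting $\alpha+\beta<0$ is the entire content of the improvement over Theorem \ref{teo-kerman} (cf.\ Remark 2.1, which shows $\alpha+\beta\ge0$ is necessary for non-radial functions). Moreover the bookkeeping is misassigned: in the regime $\rho\ll r$ one has $u\sim r$, so the scale-matched pair of weights is $(\alpha,\gamma)$, not $(\alpha,\beta)$; the $\alpha+\beta$ condition lives inside your ``diagonal'' regime where $u\ll r\sim\rho$. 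Second, a weighted Schur test with power test functions yields $L^q\to L^q$ bounds for a single exponent, whereas here the operator $g\mapsto f*g$ must map $L(q;\beta)$ to $L(r;-\gamma)$ with $r\ne q$ in general; to get the off-diagonal mapping you would still need an interpolation or rearrangement argument (O'Neil/Stein--Weiss type), i.e., essentially the machinery you are trying to avoid. Since the sharp diagonal estimate --- the only step that actually produces the $(n-1)(1-\frac1p-\frac1q)$ threshold --- is explicitly deferred as ``the main obstacle,'' the proposal as it stands is a plausible program rather than a proof; the quickest repair is to do what the paper does and feed the radial gain in through Theorem \ref{teo-ddd} at the $r=1$ endpoint.
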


\begin{theorem}
\label{teo-lorentz}
$$
L_{rad} (p_0, q_0 ; \alpha) * L_{rad} (p_1, q_1 ; \beta) \subset  L_{rad} (p, q ; - \gamma)
$$
provided
\begin{enumerate}
\item $\frac{1}{p}=\frac{1}{p_0}+\frac{1}{p_1} + \frac{\alpha+\beta+\gamma}{n}-1$ , $1<p_0, p_1, p< \infty$, $0\le \frac{1}{q}\le \frac{1}{q_0}+\frac{1}{q_1}\le 1$,
\item $\alpha<\frac{n}{p_0'}$ , $\beta< \frac{n}{p_1'}$ , $\gamma < \frac{n}{p},$
\item $\alpha + \beta > {(n-1)(1 - \frac{1}{p_0} - \frac{1}{p_1})} , \beta + \gamma > (n-1)(\frac{1}{p}- \frac{1}{p_1}), \gamma + \alpha > (n-1)(\frac{1}{p} - \frac{1}{p_0}),$
\item $\max\{\alpha, \beta , \gamma\} > 0.$
\end{enumerate}
\end{theorem}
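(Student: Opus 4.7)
The plan is to deduce Theorem~\ref{teo-lorentz} from the Lebesgue-space version Theorem~\ref{teo-radiales} by real interpolation, in direct analogy with the way \cite{K} derives Theorem~\ref{teo-kerman-lorentz} from Theorem~\ref{teo-kerman}. The crux is the fact that conditions (3) and (4) in Theorem~\ref{teo-lorentz} are strict where the corresponding conditions in Theorem~\ref{teo-radiales} are non-strict: this slack lets us realize the target parameter tuple $(1/p_0, 1/p_1, 1/p, \alpha, \beta, \gamma)$ as an interior point of the admissible region of Theorem~\ref{teo-radiales}, and therefore as a convex combination of endpoint tuples for which strong-type Lebesgue bounds are already established.

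The execution proceeds by bilinear interpolation carried out in two linear stages. Regard convolution as the bilinear operator $T(f,g)=f*g$. Fix first $g \in L_{rad}(p_1;\beta)$ and view $T_g(f)=f*g$ as a linear operator in $f$. Choose two perturbed configurations $(p_0^i, p^i, \alpha^i, \gamma^i)$, $i=0,1$, both satisfying the hypotheses of Theorem~\ref{teo-radiales} and such that an appropriate convex combination returns $(p_0, p, \alpha, \gamma)$; this is possible precisely because the inequality on $\gamma+\alpha$ in (3) is strict, and one can arrange the perturbation so that the scaled weights $\alpha p$ and $\gamma p$ remain constant across the endpoints. Real interpolation of the resulting endpoint bounds then gives $T_g: L_{rad}(p_0,q_0;\alpha) \to L_{rad}(p,q_0;-\gamma)$ with operator norm $\lesssim \|g\|_{p_1;\beta}$. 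Repeating the argument with the roles of $f$ and $g$ swapped, and exploiting the strict inequality on $\beta+\gamma$ to perturb in the $(p_1,\beta)$ variable, produces the final Lorentz index $q$ satisfying $1/q\le 1/q_0+1/q_1$.

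The main technical obstacle is that the weighted Lorentz norm $\|\cdot\|_{p,q;\alpha}$ is computed relative to the measure $|x|^{\alpha p}\,dx$, whose density depends jointly on $p$ and $\alpha$; consequently, the endpoints of an interpolation couple of weighted Lebesgue spaces may live on different measure spaces. For radial functions this can be sidestepped by identifying $L_{rad}(p,q;\alpha)$ with a Lorentz space on the half-line $(0,\infty)$ taken with respect to a fixed reference measure (for example, $r^{n-1}\,dr$) once the factor $r^{\alpha p}$ has been absorbed into the function, and then invoking Stein--Weiss interpolation of weighted Lebesgue spaces together with the real-method identification $(L^{p_0,q_0},L^{p_1,q_1})_{\theta,q}=L^{p,q}$. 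A small additional piece of bookkeeping verifies that the scaling identity (1) and the admissibility conditions (2)--(4) of Theorem~\ref{teo-lorentz} are preserved under the convex combinations used, which amounts to an elementary convex-geometry check in the parameter space.
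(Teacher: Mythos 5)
Your overall strategy---deduce the Lorentz statement from the Lebesgue statement by exploiting the strictness of hypotheses (3)--(4) to perturb parameters and interpolate---is the same in spirit as the paper's, which derives from Theorem \ref{teo-radiales} (via H\"older) a restricted weak-type inequality for characteristic functions of radial sets and then runs Kerman's interpolation machinery from \cite{K}. But there is a concrete gap in your reduction: Theorem \ref{teo-lorentz} does \emph{not} assume $\frac{1}{p}\le\frac{1}{p_0}+\frac{1}{p_1}$, whereas Theorem \ref{teo-radiales} requires $\frac{1}{r}\le\frac{1}{p}+\frac{1}{q}$. By the scaling identity, the excluded regime $\frac{1}{p}>\frac{1}{p_0}+\frac{1}{p_1}$ is exactly $\alpha+\beta+\gamma>n$, which is compatible with hypotheses (1)--(4) (for instance $n=10$, $p_0=p_1=10$, $\alpha=\beta=5$, $\gamma=1$, $\frac{1}{p}=0.3$). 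In that regime there are no admissible endpoint configurations for Theorem \ref{teo-radiales} near the target point, so your interpolation scheme has nothing to interpolate between. The paper treats this case separately: the needed restricted weak-type estimate then holds for arbitrary, not necessarily radial, sets and is quoted from \cite[Proposition 4.2]{K}.

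A second issue is the final Lorentz index. Two successive applications of \emph{linear} real interpolation (first in $f$ with $g$ frozen, then in $g$) can only produce a target second index equal to $q_0$ or to $q_1$; it cannot produce $\frac{1}{q}=\frac{1}{q_0}+\frac{1}{q_1}$, which is strictly better than both when $q_0,q_1<\infty$ (e.g.\ $q_0=q_1=2$ gives $q=1$). Reaching $\frac{1}{q}\le\frac{1}{q_0}+\frac{1}{q_1}$ requires a genuinely bilinear (restricted weak-type) interpolation theorem of O'Neil--Sharpley type, which is precisely why the paper reduces matters to the restricted weak-type inequality on characteristic functions and then invokes the proof of Theorem \ref{teo-kerman-lorentz} in \cite{K} verbatim. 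Your fixed-measure device (perturbing so that $\alpha p_0$ and $\gamma p$ stay constant across endpoints, in the spirit of Remark \ref{interpolacion-rad}) is sound and does resolve the change-of-measure obstruction for the linear stages, but it does not supply this bilinear step.
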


\begin{remark}
\label{pigual1}
Theorem \ref{teo-radiales} also holds for $r=1$ and will be proved separately (see Theorem \ref{rigual1}). Moreover, it can be seen from the proof of Theorem \ref{teo-radiales}, that it also holds for  $p=1$ (or $q=1$, but not both), provided the inequalities in $\alpha+\beta$, $\beta+\gamma$ and $\alpha+\gamma$ are strict and $\beta <n(\frac{1}{r}-\frac{1}{p})$  (respectively, $\alpha <n(\frac{1}{r}-\frac{1}{p})$). 
\end{remark}

Notice that the hypotheses of Theorem \ref{teo-kerman} are more restrictive than those of Theorem \ref{teo-radiales} (see Figure 1 below for a comparison in a special case). Indeed, in the first case, at most one among $\alpha, \beta, \gamma$ can be negative, while in the latter this condition is relaxed to at most two. Moreover, the condition on $\alpha+\beta$ in Theorem  \ref{teo-radiales}  is only seemingly more restrictive than that of Theorem \ref{teo-kerman} when $1-\frac{1}{p}-\frac{1}{q} > 0$, since in that case one has that
$$
\alpha+\beta =  n \left(1 - \frac{1}{p} - \frac{1}{q} + \frac{1}{r} - \frac{\gamma}{n} \right) >  n \left( 1 - \frac{1}{p} - \frac{1}{q} \right) >  (n-1) \left( 1 - \frac{1}{p} - \frac{1}{q} \right).
$$

Similar considerations apply also to the conditions on $\alpha + \gamma$ and $\beta + \gamma$, and in the case of Theorem \ref{teo-lorentz} compared to Theorem \ref{teo-kerman-lorentz}. 

\begin{figure}[!ht]
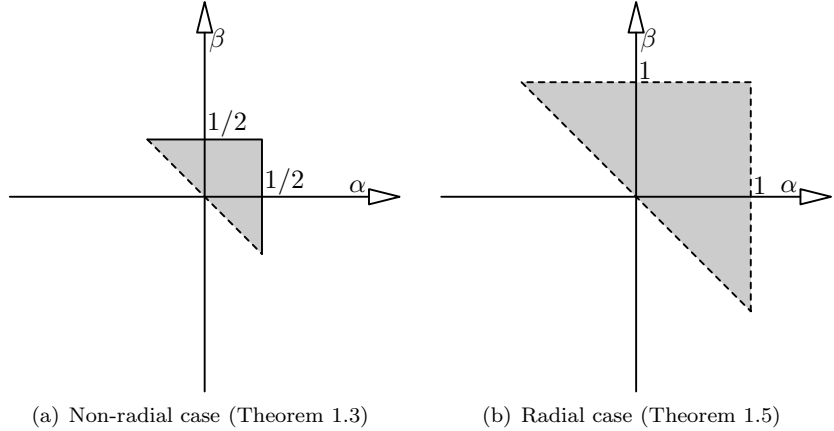

\subfigure[Non-radial case (Theorem \ref{teo-kerman})]{
\begin{texdraw}
\setunitscale 0.6
\lpatt ()
\move (0.5 -0.5)
\lvec (0.5 0.5)
\lvec (-0.5 0.5)
\lvec (0.5 -0.5)
\ifill f:0.8
\lpatt ()
\move (0.5 -0.5)
\lvec (0.5 0.5)
\lvec (-0.5 0.5)
\lpatt(0.05 0.05)
\lvec (0.5 -0.5)
\lpatt()
\move (0 -1.7)
\avec (0 1.7)
\htext (0.03 1.25){$\beta$}
\move (-1.7 0)
\avec (1.7 0)
\htext (1.25 0.03){$\alpha$}
\htext (0.51 0.01){1/2}
\htext (0.01 0.51){1/2}
\end{texdraw}}
\quad
\subfigure[Radial case (Theorem \ref{teo-radiales})]{
\begin{texdraw}
\setunitscale 0.6
\lpatt(0.05 0.05)
\move (1 -1)
\lvec (1 1)
\lvec (-1 1)
\lvec (1 -1)
\lfill f:0.8
\lpatt()
\move (0 -1.7)
\avec (0 1.7)
\htext (0.03 1.25){$\beta$}
\move (-1.7 0)
\avec (1.7 0)
\htext (1.25 0.03){$\alpha$}
\htext (1.01 0.01){1}
\htext (0.01 1.01){1}
\end{texdraw}}

\caption{Comparison of admissible values in the $\alpha \beta$-plane when $n=2$, $p=q=2$, $r=4$}
\end{figure}

The restriction to radially symmetric functions is natural, for instance, for applications to partial differential equations in $\mathbb{R}^n$, but one could also be tempted to ask whether one can improve the conditions on $\alpha+\beta$, $\alpha+\gamma$ and $\beta+\gamma$ of Theorem \ref{teo-kerman} by restricting the convolution to functions invariant with respect to a different subgroup of the orthogonal group. However, the answer is negative, as we will show below (see Remark \ref{subgrupos}).

The rest of the paper is as follows. In Section 2 we prove some preliminary results; in Section 3 we prove Theorem \ref{teo-radiales} and in Section 4 we outline the proof of Theorem \ref{teo-lorentz} and  obtain, as a corollary, weighted estimates for fractional integrals of radial functions in Lorentz spaces. Finally, in Section 5 we present embedding theorems with power weights for radial Besov spaces.

\section{Preliminary results}

First  we show that the conditions of Theorem \ref{teo-kerman} on $\alpha+\beta$, $\beta+\gamma$ and $\alpha+\gamma$ cannot be improved for arbitrary functions, or indeed for any set of functions invariant with respect to a subgroup of the orthogonal group other than the radial functions. This can be done by applying the convolution inequality to the heat kernel, as was done in \cite{B} to prove the necessity of the scaling and integrability conditions (see \cite[Theorem 2.1]{B}).
 
 \begin{remark}
 If
$$ 
L (p ; \alpha) * L (q; \beta) \subset  L (r; - \gamma) $$
 then  $\alpha+\beta\ge 0$, $\alpha+\gamma\ge 0$ and $\beta+\gamma\ge 0$. 
 
 \end{remark}
 \begin{proof}
 One can verify easily  that $L (p ; \alpha) * L (q; \beta) \subset  L (r; - \gamma)$  implies the scaling condition
$$
 \frac{1}{r}=\frac{1}{p}+\frac{1}{q}+\frac{\alpha+\beta+\gamma}{n}-1 \quad 
$$
 (or see \cite{B} for proof). We will first show that $\alpha+\beta\geq 0$.

\medskip

Let $W_t(x)= (4\pi t)^{-n/2} e^{-|x|^2/(4t)}$ be the heat kernel in $\R^n$ and let
$W_{t,y}(x) = W_t(x-y) \; \hbox{for} \; y \in \R^n$. Then, we have $ W_{t,y} * W_{t,-y} = W_{2t}$ whence, if the claimed inclusion
holds, one gets
\begin{equation}
\label{desig2}
\|W_{2t}\|_{r;-\gamma} \le C \|W_{t,y}\|_{p;\alpha} \|W_{t,-y}\|_{q;\beta} \quad  \forall \; y \in \R^n, \forall \; t>0.
\end{equation}

Now, setting $z=\frac{x}{\sqrt{2t}}$, we have that
$$
\|W_{2t}\|_{r;-\gamma} = (2t)^{(-n-\gamma +n/r)/2} \|W_1\|_{r;-\gamma} = t^{(-n-\gamma +n/r)/2} C(n,\gamma,r).
$$

Similarly, setting $z=\frac{x-y}{\sqrt{t}}$, 
$$
\|W_{t,y}\|_{p;\alpha}=  t^{(-n+\alpha+n/p)/2} 
\left( \int_{\R^n} \left|W_1(z)\right|^{p} \, \, \Big|z+\frac{y}{\sqrt{t}}\Big|^{\alpha p} \; dz  \right)^{1/p},$$
and setting $z=\frac{x+y}{\sqrt{t}}$,
$$
\|W_{t,-y}\|_{q;\beta} =  t^{(-n+\beta+n/q)/2} \left( \int_{\R^n} |W_1(z)|^{q} \left|z-\frac{y}{\sqrt{t}}\right|^{\beta q} \; dz  \right)^{1/q}.$$

Therefore, replacing in (\ref{desig2}) and noting that the powers of $t$  cancel out due to the scaling condition , one has that, for some $C>0$ depending only on the parameters $p,q,r,\alpha,\beta,\gamma$ and the dimension $n$, 
$$
 C  \leq  
\left( \int_{\R^n} |W_1(z)|^{p} \left|z+\frac{y}{\sqrt{t}}\right|^{\alpha p} \; dz  \right)^{1/p} 
\left( \int_{\R^n} |W_1(z)|^{q} \left|z-\frac{y}{\sqrt{t}}\right|^{\beta q} \; dz  \right)^{1/q}.
$$

Choosing $y= \lambda \sqrt{t} \; y_0$ for some $y_0 \in \R^n$ fixed with $|y_0|=1$, we get, for any $\lambda>0$,
$$
 C  \lambda^{-\alpha-\beta} \leq  
\left( \int_{\R^n} |W_1(z)|^{p} \left|\frac{z}{\lambda} + y_0\right|^{\alpha p} \; dz  \right)^{1/p} 
\left( \int_{\R^n} |W_1(z)|^{q} \left|\frac{z}{\lambda}-y_0 \right|^{\beta q} \; dz  \right)^{1/q}
$$
whence, letting $\lambda \to +\infty$, we deduce that $\alpha+\beta \geq 0$, since otherwise we would get 
$\|W_1\|_p \|W_1\|_q\ge +\infty$,
a contradiction.

In a similar way, using the relations
$$ W_{t} * W_{t,y} = W_{2t,y} $$
and
$$ W_{t,y} *  W_{t} = W_{2t,y} $$
(or a duality argument) we can prove the necessity of the conditions $\beta+\gamma\geq 0$, $\alpha+\gamma\geq 0$.
\end{proof}

\begin{remark}
 \label{subgrupos}

A natural question is whether the conditions $\alpha+\beta\ge 0$, $\alpha+\gamma\ge 0$ and $\beta+\gamma\ge 0$ can be improved, if we restrict our atention
to functions invariant by some subgroup $G$ of the orthogonal group $O(n)$, for instance if these conditions can be relaxed for functions in $\R^3$ with cylindrical symetry, which are invariant with respect to rotations in the $x,y$-plane (a subgroup of $O(3)$ isomorphic to $O(2)$). 

\medskip

The argument in the preceeding remark also shows that they cannot be improved if the action of $G$ on the sphere $S^{n-1}$ has a fixed point, 
i.e. if there exist some $y_0$, $|y_0|=1$, such that 
 $$ g \cdot y_0 = y_0 \qquad \forall \; g \in G.$$
(as in the case of functions with cylindrical symmetry in $\R^3$). It is enough to observe that if we choose $y_0$ in the previous argument as that fixed point, then 
the functions $W_{t,y}$, $W_{t,-y}$ and $W_{2t}$, with $y=\lambda \sqrt{t} \; y_0$ as before are $G$-invariant, so the same argument applies.
\end{remark}

Now, note that a key point in the proof of Theorems \ref{teo-kerman} and \ref{teo-kerman-lorentz} is the relationship between the convolution operator and the fractional integral, or Riesz potential, given by
\begin{equation}
\label{int-frac}
(T_\gamma f)(x)= \int_{\R^n} \frac{f(y)}{|x-y|^\gamma} \; dy, \quad
0 <\gamma < n. 
\end{equation}
Indeed, the proof in \cite{K}  invokes known weighted estimates for this operator proved by E. Stein and G. Weiss \cite{SW}. We will follow that method of proof but use instead the following result proved by the authors and R.G. Dur\'an in \cite[Theorem 1.2]{DDD}, that  gives the weighted inequalities when $T_\gamma$ is restricted to radial functions. The result for $p>1$ was previously proved by different means in \cite[Theorem 1.2]{R}. An alternative proof for all $p\ge 1$ can also be found in \cite[Theorem 5.1]{D}, where also sharpness of the result  is proved. 
\begin{theorem}\cite[Theorem 1.2]{DDD}
\label{teo-ddd}
Let $n\ge 1$, $0<\gamma<n, 1<p<\infty, \alpha<\frac{n}{p'},
\beta<\frac{n}{q}, \alpha + \beta \ge (n-1)(\frac{1}{q}-\frac{1}{p})$, and
$\frac{1}{q}=\frac{1}{p}+\frac{\gamma+\alpha+\beta}{n}-1$.  If $p\le
q<\infty$, then the inequality
$$
\|T_\gamma f\|_{q;-\beta} \le C \|f\|_{p;\alpha}
$$
holds for all radially symmetric $f\in L^p(\mathbb{R}^n, |x|^{p\alpha} dx)$,
where $C$ is independent of $f$. If $p=1$, then the result holds provided $\alpha +
\beta > (n-1)(\frac{1}{q}-1)$.
\end{theorem}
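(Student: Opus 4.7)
The plan is to exploit radial symmetry to reduce the claim to a one-dimensional weighted inequality. Writing $f(y) = \tilde f(|y|)$ and passing to polar coordinates, $(T_\gamma f)(x)$ is itself radial, so choosing $x = r e_1$,
$$
(T_\gamma f)(r) = \int_0^\infty \tilde f(\rho)\, \rho^{n-1} K(r,\rho)\, d\rho, \qquad K(r,\rho) = \int_{S^{n-1}} |re_1 - \rho \sigma|^{-\gamma}\, d\sigma.
$$
The weighted norms in the statement become one-dimensional weighted norms on $(0,\infty)$ with weights $\rho^{p\alpha + n -1}$ on the right and $r^{-q\beta + n-1}$ on the left, so the whole claim reduces to an $L^p\to L^q$ estimate for the integral operator with kernel $\rho^{n-1}K(r,\rho)$.

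The core of the argument is a sharp pointwise estimate on $K(r,\rho)$. Parametrising by $u = \cos\theta$, where $\theta$ is the angle from $e_1$,
$$
K(r,\rho) = \omega_{n-2}\int_{-1}^1 (1-u^2)^{(n-3)/2}\bigl((r-\rho)^2 + 2r\rho(1-u)\bigr)^{-\gamma/2}\, du,
$$
from which one reads off: (i) if $\rho \le r/2$ or $\rho \ge 2r$, the integrand is uniformly comparable to $\max(r,\rho)^{-\gamma}$; (ii) in the diagonal band $r/2 \le \rho \le 2r$ the only potential singularity is at $u = 1$, where the integrand behaves like $r^{-\gamma}(1-u)^{(n-3)/2-\gamma/2}$, which is integrable whenever $\gamma < n-1$ and again yields $K(r,\rho) \le Cr^{-\gamma}$. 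This \emph{desingularisation of the diagonal by spherical averaging} is the source of the radial improvement: an integrable $\R^n$-singularity is converted into a bounded one-dimensional kernel, which is exactly what allows the Stein--Weiss threshold $\alpha + \beta \ge 0$ to be weakened to $\alpha + \beta \ge (n-1)(1/q - 1/p)$.

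With the kernel bound in hand, I would split the $\rho$-integral into $\{\rho < r/2\}$, $\{r/2 \le \rho \le 2r\}$, and $\{\rho > 2r\}$. On the two outer pieces, $K(r,\rho) \le C\max(r,\rho)^{-\gamma}$ produces a one-dimensional Hardy-type operator whose boundedness between the two weighted Lebesgue spaces on $(0,\infty)$ is classical and requires exactly $\alpha < n/p'$ and $\beta < n/q$. On the middle piece, $K(r,\rho) \le Cr^{-\gamma}$ combined with H\"older's inequality reduces to a local estimate that balances precisely under $\alpha + \beta \ge (n-1)(1/q - 1/p)$. The main obstacle is the regime $\gamma \ge n-1$, where estimate (ii) fails because the spherical integral diverges on the diagonal; in that range one must retain the joint dependence of the integrand on $(r,\rho,u)$ and perform the $r$- or $\rho$-integration \emph{before} the $u$-integration via a Schur or Minkowski-type argument (alternatively, one can interpolate with the safe range $\gamma < n-1$). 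The case $p = 1$ requires separate care, as the one-dimensional Hardy inequality degenerates there, which is what forces the strict version $\alpha + \beta > (n-1)(1/q - 1)$ stated in the theorem.
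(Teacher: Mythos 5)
First, a point of reference: the paper does not prove this statement at all --- it is imported verbatim from \cite{DDD} --- so there is no in-paper argument to compare against. That said, your reduction is essentially the one used in \cite{DDD}: pass to polar coordinates, write $T_\gamma f$ as a one-dimensional integral operator on $(0,\infty)$ with kernel $\rho^{n-1}K(r,\rho)$, where $K(r,\rho)=\int_{S^{n-1}}|re_1-\rho\sigma|^{-\gamma}\,d\sigma$, and exploit the fact that the spherical average tames the diagonal singularity; the conditions $\alpha<n/p'$, $\beta<n/q$ and $\alpha+\beta\ge(n-1)(\frac1q-\frac1p)$ then emerge from Hardy-type bounds on $\{\rho<r/2\}\cup\{\rho>2r\}$ and a dyadic-block estimate on the band $r/2\le\rho\le 2r$. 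Up to the point where you invoke the bound $K(r,\rho)\le Cr^{-\gamma}$ on the band, the skeleton is sound.

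The genuine gap is the range $\gamma\ge n-1$, which you flag but do not resolve, and which cannot be treated as a side case: it is exactly the range this paper needs (in Theorem \ref{rigual1} the scaling relation gives $\gamma=n(2-\frac1p-\frac1q)-\alpha-\beta$, which routinely exceeds $n-1$). For $\gamma>n-1$ the correct diagonal behaviour is $K(r,\rho)\approx(r\rho)^{-(n-1)/2}|r-\rho|^{\,n-1-\gamma}$ (with a logarithmic factor at $\gamma=n-1$), so on the band the operator is not a bounded averaging operator but a genuine one-dimensional fractional integral of order $\gamma-(n-1)\in(0,1)$ twisted by the weight $(r+\rho)^{-(n-1)}$; one must prove a one-dimensional Stein--Weiss-type inequality for it and verify that its admissibility conditions translate precisely into the stated hypotheses --- this is also where $p=1$ forces the strict inequality $\alpha+\beta>(n-1)(\frac1q-1)$, not a degeneration of the Hardy pieces. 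Neither of your proposed fixes closes this. Interpolation ``with the safe range $\gamma<n-1$'' is not available, because the scaling relation $\frac1q=\frac1p+\frac{\gamma+\alpha+\beta}{n}-1$ ties $\gamma$ to all the other parameters, so there is no fixed pair of spaces between which to interpolate as $\gamma$ varies; and ``a Schur or Minkowski-type argument'' is a restatement of the missing step rather than a proof of it. Until the band estimate for $\gamma\ge n-1$ is actually carried out (as in \cite{DDD}, or by the alternative routes of \cite{D} or \cite{R}), the argument is incomplete precisely where the theorem is hardest and where its radial improvement lives.
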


Once we establish Theorem \ref{teo-lorentz}, we will also be able to extend this result to Lorentz spaces. However, for now we postpone a proof of this fact
and prove that, as an almost immediate consequence of the previous theorem, one has the following special case of our convolution inequalities, that we will need later:

\begin{theorem}
\label{rigual1}
If, for $1<p,q< \infty$, we have
$$
2= \frac{1}{p} + \frac{1}{q} + \frac{\alpha+\beta+\gamma}{n} \quad , \quad \frac{1}{p} + \frac{1}{q} \ge 1 \quad , 
$$
$$
\alpha < \frac{n}{p'} \quad , \quad \beta < \frac{n}{q'} \quad , \quad 0<\gamma < n  \quad , 
$$
and 
\begin{equation}
\label{condicion-alfabeta}
\alpha +\beta \ge (n-1) \left (1 - \frac{1}{p} - \frac{1}{q}\right),
\end{equation}
then
$$
L_{rad}(p; \alpha) * L_{rad}(q; \beta) \subset L_{rad}(1; -\gamma).
$$
The result also holds for $p=1$ (or $q=1$) provided that the inequality in (\ref{condicion-alfabeta}) is strict. 
\end{theorem}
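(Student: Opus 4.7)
The plan is to reduce the estimate to an application of Theorem \ref{teo-ddd} via duality, using the Riesz kernel $|x|^{-\gamma}$ to convert the $L(1;-\gamma)$ norm on the left-hand side into a pairing with the Riesz potential $T_\gamma f$.

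Assuming without loss of generality that $f, g \ge 0$, Fubini's theorem would yield
\begin{equation*}
\|f*g\|_{1;-\gamma} = \int_{\R^n} g(y) \int_{\R^n} f(x-y) |x|^{-\gamma} \, dx \, dy.
\end{equation*}
The change of variables $z = x-y$ in the inner integral gives $\int f(z) |z+y|^{-\gamma} \, dz = (T_\gamma f)(-y)$, and since $f$ is radial, the function $T_\gamma f$ is radial as well (being the convolution of two radial functions), so $(T_\gamma f)(-y) = (T_\gamma f)(y)$. Hence the $L(1;-\gamma)$ norm of $f*g$ equals $\int_{\R^n} g(y)(T_\gamma f)(y)\,dy$.

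Next I would apply H\"older's inequality with dual exponents $q$ and $q'$, splitting the integrand as $(g(y)|y|^\beta)\cdot((T_\gamma f)(y)|y|^{-\beta})$, to obtain
\begin{equation*}
\int_{\R^n} g(y)(T_\gamma f)(y) \, dy \le \|g\|_{q;\beta}\, \|T_\gamma f\|_{q';-\beta},
\end{equation*}
and then invoke Theorem \ref{teo-ddd} (with input exponent $p$, output exponent $q'$, input weight $\alpha$ and output weight $-\beta$) to conclude $\|T_\gamma f\|_{q';-\beta} \le C\|f\|_{p;\alpha}$. Combining these two estimates delivers the desired inequality.

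The remaining task is essentially bookkeeping: checking that our hypotheses match those required by Theorem \ref{teo-ddd} in this setting. The scaling $1/q' = 1/p + (\gamma+\alpha+\beta)/n - 1$ is equivalent to $2 = 1/p + 1/q + (\alpha+\beta+\gamma)/n$; the requirement $p \le q'$ translates to $1/p + 1/q \ge 1$; the bounds $\alpha < n/p'$ and $\beta < n/q'$ are already in our assumptions; and finally the condition $\alpha+\beta \ge (n-1)(1/q' - 1/p)$ reads precisely as $\alpha+\beta \ge (n-1)(1 - 1/p - 1/q)$, matching (\ref{condicion-alfabeta}). For the endpoint $p=1$ one uses the $p=1$ clause of Theorem \ref{teo-ddd}, which requires this last inequality to be strict, matching the proviso in the statement; the case $q=1$ reduces to the previous one by exchanging the roles of $f$ and $g$ (and of $\alpha$ and $\beta$) via $f*g = g*f$. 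I do not anticipate any substantive obstacle: once the Fubini/H\"older duality setup is in place, the theorem falls out as a corollary of Theorem \ref{teo-ddd}.
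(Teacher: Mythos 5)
Your proof is correct and follows essentially the same route as the paper: reduce to $f,g\ge 0$, use Tonelli/Fubini to rewrite $\|f*g\|_{1;-\gamma}$ as the pairing $\int g\, T_\gamma f$, apply weighted H\"older with exponents $q,q'$, and invoke Theorem \ref{teo-ddd} with output exponent $q'$ and output weight $-\beta$. Your verification of the hypotheses of Theorem \ref{teo-ddd} (including the $p=1$ endpoint and the $q=1$ case by symmetry) is accurate and in fact more explicit than the paper's.
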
 
\begin{proof}
It suffices to consider the case $f, g \ge 0$. Then, by Tonelli's theorem,
$$
\int_{\mathbb{R}^n} (f*g)(x) |x|^{-\gamma} \, dx = \int_{\mathbb{R}^n} (T_\gamma f)(x) g(-x) \, dx
$$
whence, by H\"older's inequality and Theorem \ref{teo-ddd}, given our conditions, we obtain
$$
\| f*g\|_{1; -\gamma} \le C \|T_\gamma f\|_{q'; -\beta} \|g\|_{q; \beta} \le C \|f\|_{p; \alpha} \|g\|_{q; \beta}
$$
\end{proof}

The above result coincides with Theorem \ref{teo-radiales} for $r=1$. The result for other values of $r$ will be obtained by duality and multilinear interpolation. The next two results make this explicit. 
\begin{lemma}
\label{lema-dualidad}
Suppose $a$ and $b$ are real numbers and $1<r\le \infty$, $1\le s < \infty$. Let $f, g$ be nonnegative functions on $\mathbb{R}^n$, $f$ radially symmetric, and define the linear operator $T_f$ by
$$
(T_f g)(x) = \int_{\mathbb{R}^n} f(x-y) g(y) \, dy.
$$
Then,
$$
T_f: L_{rad}(r; a) \to L_{rad}(s; b),
$$
with operator norm $C$ is equivalent to
$$
T_f : L_{rad}(s'; -b) \to L_{rad}(r'; -a)
$$
with the same norm. 
\end{lemma}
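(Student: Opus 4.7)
\medskip

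The plan is to exploit two facts: the convolution operator with a radial (hence even) kernel is self-adjoint with respect to the unweighted $L^2$-pairing, and the weighted $L^p$-norm of a radial function can be recovered by testing against radial duals.

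First I would observe that because $f$ is radial, $f(y-x) = f(x-y)$, so an application of Tonelli's theorem (it suffices to work with nonnegative $f, g, h$) yields the adjoint identity
\begin{equation*}
\int_{\R^n} (T_f g)(x)\, h(x)\, dx = \int_{\R^n} g(y)\, (T_f h)(y)\, dy,
\end{equation*}
valid for all measurable nonnegative $g, h$. Note moreover that if $h$ is radial then $T_f h$ is radial as well, since convolution preserves $O(n)$-invariance.

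Second, I would record the following duality for weighted Lebesgue norms on radial functions: if $F$ is radial and $1 \le p < \infty$, then
\begin{equation*}
\|F\|_{p';-\alpha} = \sup\bigl\{ {\textstyle \int} F G\, dx : G \text{ radial}, \ \|G\|_{p;\alpha} \le 1 \bigr\}.
\end{equation*}
The inequality $\ge$ follows from the unweighted H\"older inequality $\int FG = \int (F|x|^{-\alpha})(G|x|^{\alpha}) \le \|F\|_{p';-\alpha}\|G\|_{p;\alpha}$. For the reverse direction, take any $G$ nearly achieving the ordinary weighted dual sup and replace it by its spherical average $\widetilde G(x) = \int_{O(n)} G(Rx)\, dR$; rotation invariance of Lebesgue measure and radiality of $F$ give $\int F \widetilde G = \int FG$, while Jensen's inequality (applied to $|{\cdot}|^p$ and the probability measure on $O(n)$) gives $\|\widetilde G\|_{p;\alpha} \le \|G\|_{p;\alpha}$.

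With these two ingredients, the lemma is immediate. Assume $T_f: L_{rad}(r;a) \to L_{rad}(s;b)$ with norm $C$. For any radial $h$ with $\|h\|_{s';-b} \le 1$ and any radial $g$ with $\|g\|_{r;a} \le 1$, the adjoint identity combined with the ordinary weighted H\"older inequality and the hypothesis yields
\begin{equation*}
\Bigl|\int (T_f h)(y)\, g(y)\, dy \Bigr| = \Bigl|\int h(x)\, (T_f g)(x)\, dx \Bigr| \le \|h\|_{s';-b}\, \|T_f g\|_{s;b} \le C.
\end{equation*}
Since $T_f h$ is radial and $r > 1$ (so $r' < \infty$), the radial-duality formula gives $\|T_f h\|_{r';-a} \le C$, proving $T_f: L_{rad}(s';-b) \to L_{rad}(r';-a)$ with the same norm $C$. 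The converse implication is the same argument with the roles of $(r,a)$ and $(s',-b)$ interchanged, using that $s < \infty$ to apply the radial duality on $\|T_f g\|_{s;b}$. The main (mild) technical point is the symmetrization step ensuring that only radial test functions are needed in the duality; everything else is a direct application of Fubini--Tonelli.
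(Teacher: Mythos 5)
Your proof is correct and takes essentially the same route as the paper, whose entire proof is the remark that the lemma ``follows easily by using duality'' (or by restricting Kerman's Lemma~3.2 to radial functions). Your write-up simply supplies the details the paper omits --- the Tonelli/self-adjointness identity for the even kernel and, in particular, the spherical-averaging step showing that radial test functions suffice in the weighted duality --- and these are sound.
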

\begin{proof}
It follows easily by using duality. Or follow the proof in \cite[Lemma 3.2]{K} restricting the operator to radial functions.
\end{proof}

\begin{theorem}[\cite{C}]
\label{interpolacion}
Suppose $T$ is a multilinear operator satisfying 
$$
T:  L(p_i, w_i) \times L(p_i', w_i') \to L(p_i'', w_i'')
$$
with norm $K_i, i=0,1$. Then, 
$$
T: L(p_t, w_t) \times L(p_t', w_t') \to L(p_t'', w_t'')
$$
with norm at most $K_0^{1-t} K_1^t,$ where $p_t, p_t', p_t'', w_t, w_t'$ and $w_t''$ are given by 
$$
\frac{1}{p_t}= \frac{1-t}{p_0} +\frac{t}{p_1} \quad , \quad \frac{1}{q_t} = \frac{1-t}{q_0} + \frac{t}{q_1} \quad \mbox{and} \quad w_t = w_0^{p_t(1-t)/p_0}w_1^{p_t t/p_1}. 
$$
\end{theorem}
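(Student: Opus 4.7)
The plan is to use the complex interpolation method of Calder\'on, adapted to bilinear (or multilinear) operators and to weighted Lebesgue spaces via the Stein--Weiss recipe for interpolating the weights. In particular, the formula $w_t = w_0^{p_t(1-t)/p_0} w_1^{p_t t/p_1}$ in the statement is exactly the weight that emerges from that recipe, which is the main hint that complex interpolation is the right tool.

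First I would reduce, by density, to the case in which $f \in L(p_t, w_t)$ and $g \in L(p_t', w_t')$ are finitely supported simple functions, and test against a simple function $h$ of unit norm in the dual of $L(p_t'', w_t'')$. This turns the target estimate into a bound for the trilinear form $\Lambda(f,g,h) = \int T(f,g)\, h\, dx$.

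Next I would build analytic families. Setting $\frac{1}{p(z)} = \frac{1-z}{p_0} + \frac{z}{p_1}$ and
$$
W(z,x) = w_0(x)^{p(z)(1-z)/p_0}\, w_1(x)^{p(z)z/p_1},
$$
so that $p(t) = p_t$ and $W(t,\cdot) = w_t$, define
$$
f_z(x) = |f(x)|^{p_t/p(z)}\, \mathrm{sgn}\, f(x) \left( \frac{w_t(x)}{W(z,x)} \right)^{1/p(z)},
$$
and analogously $g_z$ and $h_z$ with the parameters of the other two slots. By construction $f_t = f$, $g_t = g$, $h_t = h$, and on each vertical line $\mathrm{Re}\, z = i$ the modulus $|f_z|$ is explicitly a power of $|f|$ times a power of the weight which makes $\|f_z\|_{L(p_i, w_i)}$ equal (up to a harmless $y$-independent constant) to $\|f\|_{L(p_t, w_t)}$, and similarly for $g_z$, $h_z$.

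Finally I would apply the Hadamard three lines lemma to $F(z) = \Lambda(f_z, g_z, h_z)$. Because $f,g,h$ are simple, $F$ is entire of admissible growth; the bilinear hypothesis at the endpoints together with H\"older's inequality give $|F(iy)| \le K_0$ and $|F(1+iy)| \le K_1$ after normalizing $f,g,h$ to have unit norm, and the three lines lemma produces $|F(t)| \le K_0^{1-t} K_1^t$. Taking the supremum over $h$ yields the stated bound on $\|T(f,g)\|_{L(p_t'', w_t'')}$. The only genuinely delicate step is the choice of the weight factor $(w_t/W(z,\cdot))^{1/p(z)}$ in the analytic family: this is what both forces the interpolated weight to be precisely $w_0^{p_t(1-t)/p_0} w_1^{p_t t/p_1}$ and keeps $\|f_z\|_{L(p_i,w_i)}$ constant on the line $\mathrm{Re}\, z = i$. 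Once this is in place the rest of the argument is formal.
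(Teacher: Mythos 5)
Your proposal is correct and is essentially the proof the paper implicitly relies on: the theorem is quoted from Calder\'on's paper on the complex method, and your three-lines argument with the analytic families $f_z=|f|^{p_t/p(z)}\,\mathrm{sgn}(f)\,\bigl(w_t/W(z,\cdot)\bigr)^{1/p(z)}$ is exactly the Stein--Weiss/Calder\'on construction that identifies the interpolation space of $L(p_0,w_0)$ and $L(p_1,w_1)$ as $L(p_t,w_t)$ and then interpolates the trilinear form. The weight factor you single out is indeed the crux, since it gives $|f_{j+iy}|^{p_j}w_j=|f|^{p_t}w_t$ pointwise on each boundary line $\mathrm{Re}\,z=j$, $j=0,1$, which is what makes the boundary norms constant and the three-lines lemma applicable.
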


\begin{remark}
When $w_i= |x|^{\alpha_i p_i}, w_i'= |x|^{\alpha_i' p_i'}, w_i''= |x|^{\alpha_i'' p_i''}$, clearly one has $\alpha_t = (1-t) \alpha_0 + t \alpha_1$, $\alpha_t' = (1-t)\alpha_0' + t \alpha_1'$ and $\alpha_t'' = (1-t)\alpha_0'' + t \alpha_1''.$
\end{remark}

\begin{remark}
\label{interpolacion-rad}
Since we will actually use the above theorem to interpolate between the subspaces of radial functions of the corresponding spaces, a comment is in order.  In general, one cannot freely interpolate between subspaces and guarantee that the intermediate space is the expected subspace. However, the subspaces $L_{rad}(p,\alpha)$ in $\mathbb{R}^n$ are isomorphic to spaces $L(p, \alpha + \frac{n-1}{p})$ in $(0,\infty)$, so one may interpolate in the latter setting and use the fact that the interpolation commutes with the standard isomorphism.
\end{remark}

\section{Convolution in weighted Lebesgue spaces }

In this section we prove Theorem \ref{teo-radiales}.  First we claim the following:

\begin{remark} \label{reduccion}
By Theorem  \ref{teo-kerman}, it suffices to establish Theorem  \ref{teo-radiales} in the following cases:
\begin{enumerate}
\item $\alpha + \beta \le 0, \gamma > 0$ \label{caso1}
\item $\beta + \gamma \le 0 , \alpha > 0$ \label{caso2}
\item $\alpha + \gamma \le 0 , \beta > 0$ \label{caso3}
\end{enumerate}
\end{remark}
\begin{proof}

We may assume $\max \{\alpha, \beta, \gamma\}=\gamma >0$ since the other cases follow similarly. Moreover, by symmetry between $\alpha$ and $\beta$, it suffices to consider the following three possibilities: 

 $\gamma > 0 \ge \alpha \ge \beta$. This is contained in case  \ref{caso1} above.
 
  $\gamma \ge \alpha  \ge \beta \ge 0$, $\gamma \neq 0$. This is contained in Theorem \ref{teo-kerman}.

  $\gamma \ge \alpha  \ge 0 \ge  \beta$, $\gamma \neq 0$. If $\alpha+\beta \le 0$, it reduces to case  \ref{caso1} above. If instead $\alpha+ \beta >0$, it  follows that $\alpha+\gamma\ge 0$, $\beta+\gamma \ge 0$ and the result is again contained in Theorem \ref{teo-kerman}.

\end{proof}

\begin{proof}(\emph{Theorem \ref{teo-radiales}})  We begin by considering the case $\alpha+\beta \le 0, \gamma > 0$.

Now, $\alpha+\beta \le 0$ clearly implies $\frac{1}{p}+\frac{1}{q} \ge 1$, so this case is analogous to the first case of the proof of Theorem \ref{teo-kerman} given in \cite{K}, and
the result follows by interpolation between H\"older's inequality and  Theorem \ref{rigual1}, that is, using Theorem \ref{interpolacion} with the endpoints 
$$
r_0 = \infty \quad , \quad \frac{1}{p_0}+\frac{1}{q_0} = 1 \quad , \quad \alpha_0=\beta_0 =\gamma_0 = 0
$$
and
$$
r_1 = 1 \quad , \quad  2 = \frac{1}{p_1} + \frac{1}{q_1} + \frac{\alpha_1 + \beta_1 + \gamma_1}{n} \quad , \quad  \frac{1}{p_1}+\frac{1}{q_1} \ge 1
$$
$$
 \alpha_1 < \frac{n}{p_1'} \quad , \quad \beta_1 < \frac{n}{q_1'}  \quad , \quad 0< \gamma_1 <n 
$$
$$
\alpha_1+\beta_1 \ge (n-1)\left(1 - \frac{1}{p_1} - \frac{1}{q_1}\right).
$$
Hence, $t=\frac{1}{r}$ and $\alpha_1= r\alpha \, , \,  \beta_1 = r \beta \, , \,  \gamma_1 = r \gamma$. 

We need to check that the hypotheses of Theorem \ref{rigual1} are satisfied. Clearly $0< \gamma_1 < n$. The remaining conditions depend on the choice of $p_0$. We begin by considering $\beta_1 < \frac{n}{q_1'}$, which is equivalent to 
\begin{equation}
\label{cond-p0}
\frac{1}{p_0} < \frac{1-\frac{1}{q} - \frac{\beta}{n}}{1-\frac{1}{r}}.
\end{equation}

In order to choose $p_0$ we consider the following two cases: 

\emph{Case 1:}  If the right hand side is greater than 1, that is $\frac{1}{r} - \frac{1}{q} > \frac{\beta}{n}$, we choose $p_0=1$. Then,  $q_0 = \infty \, , \, \frac{1}{p_1} = r (\frac{1}{p} + \frac{1}{r} - 1)$ and $\frac{1}{q_1} = \frac{r}{q}$. Given this, one can check that condition $2= \frac{1}{p_1} + \frac{1}{q_1} + \frac{\alpha_1 + \beta_1 + \gamma_1}{n}$ follows from $\frac{1}{r}= \frac{1}{p}+\frac{1}{q} + \frac{\alpha + \beta + \gamma}{n} -1$, that condition $\alpha_1<\frac{n}{p_1'}$ follows from $\alpha<\frac{n}{p'}$, that condition $\frac{1}{p_1} + \frac{1}{q_1} \ge 1$ follows from $\frac{1}{p} + \frac{1}{q} \ge 1$, and that condition $\alpha_1 + \beta_1 \ge (n-1)(1 - \frac{1}{p_1} - \frac{1}{q_1})$ follows from $\alpha+\beta \ge (n-1)(1-\frac{1}{p} - \frac{1}{q}).$

\emph{Case 2:} If $\frac{1}{r} - \frac{1}{q} \le \frac{\beta}{n}$, we choose $\frac{1}{p_0}=(1-\varepsilon)(1-\frac{1}{q} - \frac{\beta}{n})/(1-\frac{1}{r})$ for small positive $\varepsilon$ that we will choose later. Then,  $\frac{1}{q_0} =  [\frac{1}{q}+\frac{\beta}{n}-\frac{1}{r}+ \varepsilon (1-\frac{1}{q}-\frac{\beta}{n})]/(1-\frac{1}{r})$, $\frac{1}{p_1} = r[\frac{1}{p} - (1-\varepsilon)(1 + \frac{1}{q} + \frac{\beta }{n})]$ and $\frac{1}{q_1} = r[\frac{1}{r} - \frac{\beta}{n}-\varepsilon(1-\frac{1}{q}-\frac{\beta}{n})].$ Therefore, condition $\alpha_1 < \frac{n}{p_1'} $ follows from the scaling condition and the fact that $\gamma>0$, provided we choose $\varepsilon<\gamma/(\frac{n}{q'}-\beta)$, condition $\frac{1}{p_1} + \frac{1}{q_1} \ge 1$ follows from $\frac{1}{p} + \frac{1}{q} \ge 1$, condition $2=\frac{1}{p_1} + \frac{1}{q_1} + \frac{\alpha_1 + \beta_1 + \gamma_1}{n}$ follows from $\frac{1}{r} = \frac{1}{p} + \frac{1}{q} + \frac{\alpha+\beta+\gamma}{n} -1$ and condition $\alpha_1 + \beta_1  \ge (n-1)(1-\frac{1}{p_1} - \frac{1}{q_1})$ follows from $\alpha + \beta \ge (n-1)(1-\frac{1}{p} - \frac{1}{q})$.

This completes the case $\alpha + \beta \le 0 \, , \, \gamma> 0$. By Lemma \ref{lema-dualidad}, one has then the result for $\gamma + \beta \le 0  \, , \, \alpha > 0$ and $\alpha + \gamma \le 0 \, , \, \beta > 0$, which in view of Remark \ref{reduccion} complete the proof.
\end{proof}

\begin{remark}
If $p=1$, as mentioned in Remark \ref{pigual1}, one can choose $p_0=p_1=1$ above, since obviously $\frac{1}{p}+\frac{1}{q}\ge 1$. However, by equation (\ref{cond-p0}), in this case one needs $\beta<n(\frac{1}{r}-\frac{1}{q})$ to follow through the proof. By symmetry, if $q=1$, one needs $\alpha<n(\frac{1}{r}-\frac{1}{p})$.
\end{remark}

\section{Convolution in weighted Lorentz spaces}

\begin{proof}\emph{(Theorem \ref{teo-lorentz})}. The proof can be carried out exactly as the proof of Theorem \ref{teo-kerman-lorentz} given in \cite{K}, once we establish that,  under the assumptions of the theorem, the restricted weak type inequality
$$
\int_H (\chi_F * \chi_G) (x)  |x|^{-\gamma p} \, dx 
$$
$$
\le  C\left(\int_F |x|^{\alpha p_0} \, dx \right)^{\frac{1}{p_0}} \left(\int_G |x|^{\beta p_1} \, dx \right)^{\frac{1}{p_1}} \left(\int_H |x|^{\gamma p} \, dx \right)^{\frac{1}{p'}}
$$
holds  for $F,G,H \subset \mathbb{R}^n$ of finite measure, such that $\chi_F$ and $\chi_G$ are radial. Indeed, 
if $\frac{1}{p} \le \frac{1}{p_0} + \frac{1}{p_1}$, by H\"older's inequality 
$$
\int_{H} (\chi_F * \chi_G )(x) |x|^{-\gamma p} \, dx \le C \|\chi_F * \chi_G\|_{p; -\gamma} \left(\int_H |x|^{-\gamma p} \, dx \right)^{\frac{1}{p'}}
$$
and the result follows by Theorem \ref{teo-radiales}. 
If instead one has $\frac{1}{p}>\frac{1}{p_0} + \frac{1}{p_1}$, the result holds even for non necessarily radial functions and is contained in \cite[Proposition 4.2]{K}. 

Since the rest of the proof is as in \cite{K}, noticing again that one may interpolate between radial subspaces for similar reasons as those of Remark \ref{interpolacion-rad}, we leave the details to the reader. 
\end{proof}

As an application of Theorem \ref{teo-lorentz} one has the following result for Riesz potentials (defined by (\ref{int-frac})) of radial functions in Lorentz spaces with power weights, that extends the result obtained in \cite[Theorem 4.5]{K} for non necessarily radial functions in a similar way.

\begin{theorem}
Let $0<\lambda<n$, $1<p_0<\infty$, $\alpha<\frac{n}{p_0'}$, $\gamma<\frac{n}{p}$, $\alpha+\gamma> (n-1)(\frac{1}{p}-\frac{1}{p_0})$ and $\frac{1}{p}=\frac{1}{p_0} + \frac{\alpha+\lambda+\gamma}{n}-1$. Then, 
$$
T_\lambda : L_{rad}(p_0, q_0; \alpha) \to L_{rad}(p, q; -\gamma)
$$
for $q\ge q_0.$
\end{theorem}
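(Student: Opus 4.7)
The plan is to apply Theorem \ref{teo-lorentz} to the convolution representation $T_\lambda f = f * K_\lambda$, where $K_\lambda(x) = |x|^{-\lambda}$ is the (radial) Riesz kernel.

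First I would compute, via the weighted distribution function of $K_\lambda$, that $K_\lambda \in L_{rad}(p_1, \infty; \beta)$ precisely when $p_1 = n/(\lambda - \beta)$ and $\lambda - n < \beta < \lambda$ (so that $1 < p_1 < \infty$). For such a choice the weak-Lorentz norm $\|K_\lambda\|_{p_1, \infty; \beta}$ is an explicit constant depending only on $n$, $\lambda$, and $\beta$.

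Next I would choose a parameter $\beta$ so that the triple $(p_0, q_0, \alpha)$, $(p_1, \infty, \beta)$, $(p, q, -\gamma)$ satisfies every hypothesis of Theorem \ref{teo-lorentz}. Substituting $1/p_1 = (\lambda - \beta)/n$ makes $\beta$ cancel out of the scaling identity of that theorem, which then reduces to the given scaling $1/p = 1/p_0 + (\alpha + \lambda + \gamma)/n - 1$. The bounds $\alpha < n/p_0'$, $\gamma < n/p$, and the sum condition $\alpha + \gamma > (n-1)(1/p - 1/p_0)$ are hypotheses of the theorem being proved. The condition $\beta < n/p_1'$ simplifies to $\lambda < n$. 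The remaining two sum conditions become, after substitution and simplification, the explicit lower bounds
\[
\beta > (n-1)\left(\frac{n}{p_0'} - \lambda\right) - n\alpha \qquad \text{and} \qquad \beta > (n-1)\left(\frac{n}{p} - \lambda\right) - n\gamma.
\]
The condition $\max\{\alpha, \beta, \gamma\} > 0$ is secured by taking $\beta > 0$, and the secondary-index hypothesis $0 \le 1/q \le 1/q_0 + 1/q_1 \le 1$ with $q_1 = \infty$ reduces exactly to $q \ge q_0$, which is assumed.

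The main obstacle is to verify that both displayed lower bounds are strictly less than $\lambda$, so that a valid $\beta \in (0, \lambda)$ actually exists. Using the scaling relation $\lambda = n(1/p - 1/p_0 + 1) - \alpha - \gamma$ to eliminate $\lambda$, one checks that the first inequality is equivalent to $\gamma < n/p + 1/p_0'$, which is weaker than the hypothesis $\gamma < n/p$ (using $p_0 > 1$); a symmetric calculation reduces the second bound to $\alpha < n/p_0' + 1/p$, which follows from $\alpha < n/p_0'$. With such $\beta$ fixed, Theorem \ref{teo-lorentz} yields
\[
\|T_\lambda f\|_{p, q; -\gamma} = \|f * K_\lambda\|_{p, q; -\gamma} \le C \, \|f\|_{p_0, q_0; \alpha} \, \|K_\lambda\|_{p_1, \infty; \beta},
\]
and since the last factor is a finite constant, the boundedness of $T_\lambda : L_{rad}(p_0, q_0; \alpha) \to L_{rad}(p, q; -\gamma)$ follows.
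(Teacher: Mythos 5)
Your proposal is correct and follows essentially the same route as the paper: write $T_\lambda f = f*|x|^{-\lambda}$, place the kernel in $L_{rad}(p_1,\infty;\beta)$ with $1/p_1=(\lambda-\beta)/n$, and apply Theorem \ref{teo-lorentz} with $q_1=\infty$, checking that the scaling and sum conditions reduce to the stated hypotheses. The only (immaterial) difference is that you fix $\beta$ and derive $p_1$, whereas the paper fixes $1/p_1<\min\{1/p_0',1/p\}$ and derives $\beta=\lambda-n/p_1$; the verifications are equivalent.
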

\begin{proof}
Assume $f$ radial. Then, by Theorem \ref{teo-lorentz},
$$
\|T_\lambda f\|_{p,q; -\gamma} \le C \| |x|^{-\lambda}\|_{p_1,q_1;\beta} \, \|f\|_{p_0, q_0; \alpha}
$$
provided we also have $\frac{1}{p}=\frac{1}{p_0}+\frac{1}{p_1}+\frac{\alpha+\beta+\gamma}{n}-1$, $\beta<\frac{n}{p_1'}$, $\alpha+\beta>(n-1)(1-\frac{1}{p_0}-\frac{1}{p_1})$, $\beta+\gamma>(n-1)(\frac{1}{p}-\frac{1}{p_1})$ and $\max\{\alpha, \beta, \gamma\}>0.$ 

We claim that it suffices to take $q_1=\infty$, $\frac{1}{p_1} < \min\{\frac{1}{p_0'}, \frac{1}{p}\}$ and let $\frac{\beta}{n}=\frac{\lambda}{n}-\frac{1}{p_1}.$ Indeed, with this choice of parameters, $ \| |x|^{-\lambda}\|_{p_1,q_1;\beta} <\infty$ and $\frac{1}{p}=\frac{1}{p_0} + \frac{\alpha+\lambda+\gamma}{n}-1$.
Moreover, by the conditions on $\alpha$ and $\gamma$ and the choice of $p_1$,
$$
\alpha+\beta = n\left( \frac{1}{p}- \frac{1}{p_0} -\frac{1}{p_1} -\frac{\gamma}{n} + 1\right) > (n-1)\left(1-\frac{1}{p_0}-\frac{1}{p_1}\right)
$$
and
$$
\beta+\gamma = n\left( \frac{1}{p}- \frac{1}{p_0} -\frac{1}{p_1} -\frac{\alpha}{n} + 1\right) > (n-1)\left(\frac{1}{p}-\frac{1}{p_1}\right).
$$
Finally, since by the condition on $p_1$
$$
\frac{\alpha+\beta+\gamma}{n} = \frac{1}{p} + \frac{1}{p_0'}- \frac{1}{p_1} > 0,
$$
one clearly has $\max\{\alpha,\beta, \gamma\}>0,$ as required. 

\end{proof}

\section{Applications to weighted embeddings of radial Besov spaces}

In this section we show how Theorem \ref{teo-radiales} can be used to obtain weighted embedding theorems for radial Besov spaces with power weights. We will follow closely the proof recently given by Meyries and Veraar  in the non-radial case \cite{MV} (see also references therein for previously known results); this will allow us to be sketchy in the standard part of the proof and to point out precisely in which steps one can obtain improvements in the radial case.

 One should note that the result in  \cite{MV} is not the more general available, since results for general $A_\infty$ weights (which include power weights) have been obtained by Haroske and Skrzypczak  in \cite{HS}, where also compactness of the embeddings and entropy numbers are analyzed. However, the proofs in \cite{MV} are much simpler if one is interested in obtaining the embeddings for power weights only. Moreover, they have the advantage of being presented in such a way that they also hold for the general vector-valued case.
 
To  keep our presentation as simple as possible, we will only consider the scalar case, but the vector-valued case can be obtained as in \cite{MV} using Lemma \ref{lemma-frac} and Theorem \ref{desig-npp} below instead of \cite[Lemma 4.5]{MV} and \cite[Proposition 4.1]{MV}, respectively. The same can be done if one is interested in obtaining the improvements in the radial case for embeddings for Triebel-Lizorkin and potential spaces also presented in \cite{MV}.
 
Finally, we remark that until now only unweighted embeddings were known to improve in the radial case with respect to the non-radial case, both in the Besov and Triebel-Lizorkin settings (see, for instance, \cite{SS}). The authors together with N. Saintier are currently working in $A_\infty$ weighted versions in the radial case and their compactness properties \cite{DDS}, but they require completely different, more sophisticated techniques. 

We begin with some necessary definitions.

\begin{definition}[Construction of the Littlewood-Paley partition]\label{def:Phi}
Let $\varphi \in \Schw(\R^n)$ be such that $0\leq \wh{\varphi}(\xi)\leq 1$ for all  $\xi\in \R^n$, 
  $\wh{\varphi}(\xi) = 1$ if $|\xi|\leq 1$, and  $\wh{\varphi}(\xi)=0$ if $|\xi|\geq \frac32$.
 Let $\wh{\varphi}_0 = \wh{\varphi}$, and
$\wh{\varphi}_k(\xi) = \wh{\varphi}(2^{-k}\xi) - \wh{\varphi}(2^{-k+1}\xi)$ 
for all $\xi\in \R^n$ and $k\geq 1$.

Finally, let $\Phi$ be the set of all sequences $(\varphi_n)_{n\geq 0}$ constructed in the above way.
\end{definition}

For  $\varphi$ as in the definition and $f\in \TD(\R^n)$ one sets
\[S_k f := \varphi_k * f = \F^{-1} [\wh{\varphi}_k \wh{f}],\]
which belongs to $C^\infty(\R^n)\cap \TD(\R^n)$. 
Since $\sum_{k\geq 0} \wh{\varphi}_k(\xi) = 1$ for all $\xi\in \R^n$, 
we have $\sum_{k\geq 0} S_kf =  f$ in the sense of distributions.

\begin{definition}\label{def:Besov}
Let $p,q\in [1,\infty]$, $s\in\R$. The (inhomogeneous) Besov space 
$B^s(p,q;\alpha)$ is defined as the space of all
$f\in {\mathcal S}'(\R^n)$ for which 
\[ \|f\|_{B^s (p,q;\alpha)} :=
                                \left(  \sum_{k\ge 0} 2^{ks} \|S_kf\|_{p;\alpha}^q 
                               \right)^{1/q} < \infty. \]
\end{definition}
with the usual modifications for $q = \infty$. The corresponding radial version will be denoted by $B^s_{rad}(p,q;\alpha)$. The following elementary embedding for Besov spaces is well known and will be useful later  to prove the main embedding theorem.

\begin{lemma}
\label{embedding}
For all $q_0, q_1\in [1, \infty], p \in [1, \infty], s \in \R, \gamma > -n$ and $\varepsilon > 0$ there holds
$$
B^{s+\varepsilon}_{rad}(p, q_0, \gamma/p) \hookrightarrow B^s_{rad}(p, q_1, \gamma/p).
$$
\end{lemma}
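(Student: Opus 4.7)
The plan is to reduce the claim to an elementary inequality between $\ell^{q_0}$ and $\ell^{q_1}$ norms of a sequence of nonnegative numbers. Setting $a_k := 2^{ks} \|S_k f\|_{p;\gamma/p}$ and $b_k := 2^{k(s+\varepsilon)} \|S_k f\|_{p;\gamma/p} = 2^{k\varepsilon} a_k$, the desired embedding is equivalent to
$$\left\| (a_k) \right\|_{\ell^{q_1}} \leq C \left\| (b_k) \right\|_{\ell^{q_0}}$$
with a constant $C$ depending only on $\varepsilon, q_0, q_1$. Note that neither the radial assumption nor the precise value of the weight $\gamma/p$ plays any role in this reduction: each $\|S_k f\|_{p;\gamma/p}$ is simply a nonnegative real number, and only the structure of the resulting sequence matters. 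The hypothesis $\gamma > -n$ is used implicitly to ensure the weighted Lebesgue norms behave in the standard way.

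In the case $q_0 \leq q_1$, one has $a_k \leq b_k$ pointwise (because $2^{-k\varepsilon} \leq 1$ for $k \geq 0$), whence $\|(a_k)\|_{\ell^{q_0}} \leq \|(b_k)\|_{\ell^{q_0}}$, and the trivial embedding $\ell^{q_0} \hookrightarrow \ell^{q_1}$ with constant one closes the argument.

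In the case $q_1 < q_0$, the positive $\varepsilon$ is actually needed. Assuming first $q_0, q_1 < \infty$, I would write $a_k^{q_1} = 2^{-k\varepsilon q_1} b_k^{q_1}$ and apply H\"older's inequality with exponents $q_0/q_1$ and $q_0/(q_0 - q_1)$ to obtain
$$\sum_{k \geq 0} a_k^{q_1} \leq \left( \sum_{k \geq 0} b_k^{q_0} \right)^{q_1/q_0} \left( \sum_{k \geq 0} 2^{-k\varepsilon q_1 q_0/(q_0 - q_1)} \right)^{(q_0 - q_1)/q_0},$$
where the last factor is a convergent geometric series precisely because $\varepsilon > 0$. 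The endpoint cases where $q_0$ or $q_1$ equals $\infty$ are handled by direct estimates: if $q_0 = \infty$ and $q_1 < \infty$, bound $a_k \leq 2^{-k\varepsilon} \|(b_k)\|_{\ell^\infty}$ and sum the geometric series; if $q_1 = \infty$, use $\|(a_k)\|_{\ell^\infty} \leq \|(a_k)\|_{\ell^{q_0}} \leq \|(b_k)\|_{\ell^{q_0}}$.

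There is no real obstacle here; the lemma is purely about the sequence-space structure of the Besov norm in its smoothness and summability indices, and its proof is insensitive to whether one works with the radial subspace or with the full Besov space. The only bookkeeping required is the case split according to the relative size of $q_0$ and $q_1$ and the handling of the endpoint values in $\{1,\infty\}$.
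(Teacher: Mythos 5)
Your proof is correct and is exactly the standard argument that the paper outsources to Triebel (Section 2.3.2): the embedding reduces to the pointwise bound $2^{ks}\|S_kf\|_{p;\gamma/p}\le 2^{k(s+\varepsilon)}\|S_kf\|_{p;\gamma/p}$ together with either the monotonicity $\ell^{q_0}\hookrightarrow\ell^{q_1}$ or, when $q_1<q_0$, H\"older's inequality against the convergent geometric series generated by $2^{-k\varepsilon}$. Nothing radial or weight-specific enters, as you note, so no further justification is needed.
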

\begin{proof}
See \cite[Section 2.3.2]{T}. 
\end{proof}

With the above definitions, we are ready to state a weighted Nikol'skij-Plancherel-P\'olya type inequality  for radial functions  (Theorem \ref{desig-npp}) which is the key step in the embedding proof. First we prove the following elementary lemma.

\begin{lemma}
\label{lemma-frac}
If $\eta \in \mathcal{S}(\R^n)$ is a Schwartz function and $g_x(z)=\chi_{B(x,1)}(z)$, there holds $|\eta(x-y)|\le T_\gamma g_x(y)$ for all $0<\gamma<n$.
\end{lemma}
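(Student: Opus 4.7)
The plan is to exploit the rapid decay of the Schwartz function $\eta$ against the mild decay of $T_\gamma g_x$, after reducing everything to a function of the single variable $u:=x-y$. (The stated inequality should of course be read up to a multiplicative constant depending only on $\eta$, $\gamma$ and $n$, which can be absorbed into $\eta$ once one is allowed to rescale.)

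First I would rewrite the Riesz potential by the change of variables $w=z-x$:
\[
T_\gamma g_x(y)=\int_{B(x,1)}\frac{dz}{|y-z|^\gamma}=\int_{B(0,1)}\frac{dw}{|u+w|^\gamma},\qquad u=x-y,
\]
so that the claim becomes a pointwise comparison between $|\eta(u)|$ and $\int_{B(0,1)}|u+w|^{-\gamma}\,dw$.

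Next I would obtain a lower bound on this integral in two regimes. For $|u|\le 1$, the substitution $v=u+w$ gives $\int_{B(0,1)}|u+w|^{-\gamma}dw=\int_{B(u,1)}|v|^{-\gamma}dv\ge\int_{B(0,1/2)}|v|^{-\gamma}dv=:c_1>0$, using $0<\gamma<n$ so that $|v|^{-\gamma}$ is locally integrable at the origin. For $|u|\ge 1$, the triangle inequality gives $|u+w|\le |u|+1\le 2|u|$ for all $w\in B(0,1)$, hence $\int_{B(0,1)}|u+w|^{-\gamma}dw\ge (2|u|)^{-\gamma}|B(0,1)|=c_2|u|^{-\gamma}$. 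Combining the two regimes,
\[
T_\gamma g_x(y)\ge c\,\min\{1,|u|^{-\gamma}\}\qquad\text{for all }u\in\R^n.
\]

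Finally, since $\eta\in\Schw(\R^n)$, for every $N\in\mathbb N$ there exists $C_N$ with $|\eta(u)|\le C_N(1+|u|)^{-N}$. Choosing $N\ge\gamma$, this yields $|\eta(u)|\le C'\min\{1,|u|^{-\gamma}\}$, and comparison with the previous lower bound finishes the proof. The only mild subtlety is the dichotomy between the near- and far-field regimes (handled by the substitution $v=u+w$ near the origin and by the triangle inequality at infinity); there is no real obstacle, since the statement is essentially the quantitative fact that a Schwartz function decays faster than the Riesz potential of a bounded set.
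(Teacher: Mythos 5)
Your argument is essentially the paper's: both rest on bounding $T_\gamma g_x(y)$ from below by a constant times $(1+|x-y|)^{-\gamma}$ and then invoking the rapid decay of $\eta$ (and yes, the stated inequality must be read up to a constant depending on $\eta$, exactly as you note). The paper does this in one line and with no case split: for $z\in B(x,1)$ one has $|z-y|<1+|x-y|$, so the integrand is pointwise at least $(1+|x-y|)^{-\gamma}$ and hence $T_\gamma g_x(y)\ge |B(0,1)|\,(1+|x-y|)^{-\gamma}$ for all $y$ simultaneously; the local integrability of $|v|^{-\gamma}$ near the origin is never needed. One step of your near-field case is false as written: the inclusion $B(0,1/2)\subseteq B(u,1)$ requires $|u|\le 1/2$, not $|u|\le 1$ (take $u=e_1$ and $v=-\tfrac14 e_1$, for which $|v-u|=\tfrac54>1$). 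The conclusion that the integral is bounded below by some $c_1>0$ for $|u|\le 1$ still holds, e.g.\ because $|v|\le |u|+1\le 2$ on $B(u,1)$ gives $|v|^{-\gamma}\ge 2^{-\gamma}$ there --- but that is precisely the paper's uniform triangle-inequality bound again, so this is a repairable slip rather than a structural gap, and the case dichotomy can simply be dropped.
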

\begin{proof}
Since $\eta$ is a Schwartz function, there is a constant $C$ such that $|\eta(y)|\le C (1+|y|)^{-\gamma}$ for all $y\in \R^n$, whence, noting that if $z\in B(x,1)$ then $|z-y|< 1 + |x-y|$, we have
$$
T_\gamma g_x(y) = \int_{B(x,1)} \frac{1}{|z-y|^\gamma} \, dz \ge  \frac{C}{(1+|x-y|)^\gamma} \ge |\eta(x-y)|.
$$
\end{proof}

\begin{theorem}
\label{desig-npp}
Let $1<p_0, p_1 \le \infty$. Let $\gamma_0,\gamma_1 >-n$. Then, if $f:\mathbb{R}^n\to \mathbb{R}$ is a radial function such that $supp(\widehat f)\subseteq \{ x\in \R^n : |x|<1\}$. Then 
$$
\|f\|_{p_1; \frac{\gamma_1}{p_1}} \le C \|f\|_{p_0; \frac{\gamma_0}{p_0}}
$$
provided that either

\begin{equation}
\label{positivos}
p_0\ge p_1 \quad \mbox{and} \quad \frac{\gamma_0}{p_0} - \frac{\gamma_1}{p_1} > n \left( \frac{1}{p_1}-\frac{1}{p_0}\right)
\end{equation}
or 

\begin{equation}
\label{negativos}
p_0< p_1 \quad \mbox{and} \quad  \frac{\gamma_0}{p_0} - \frac{\gamma_1}{p_1} \ge (n-1)\left( \frac{1}{p_1}-\frac{1}{p_0}\right).
\end{equation}
\end{theorem}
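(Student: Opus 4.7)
The plan is to exploit the band-limited hypothesis by fixing, once and for all, a radial Schwartz function $\eta\in\Schw(\R^n)$ with $\wh{\eta}\equiv 1$ on $\overline{B(0,1)}$, so that $f=\eta*f$ pointwise. Then $\|f\|_{p_1;\gamma_1/p_1}=\|\eta*f\|_{p_1;\gamma_1/p_1}$ and the inequality will follow from the radial weighted convolution inequality, Theorem \ref{teo-radiales}, applied with the dictionary
$$r=p_1,\quad p=p_0,\quad \alpha=\gamma_0/p_0,\quad \gamma=-\gamma_1/p_1,$$
and auxiliary parameters $q\in(1,\infty)$ and $\beta$ to be chosen. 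The output will be
$$\|f\|_{p_1;\gamma_1/p_1}\le C\,\|\eta\|_{q;\beta}\,\|f\|_{p_0;\gamma_0/p_0},$$
and $\|\eta\|_{q;\beta}$ is finite since $\eta$ is Schwartz, provided only that $\beta>-n/q$, which will follow automatically from the remaining admissibility constraints (in particular $\alpha<n/p_0'$ and $\gamma<n/p_1$).

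The decisive observation is that the cross-sum condition $\alpha+\gamma\ge(n-1)(1/r-1/p)$ of Theorem \ref{teo-radiales} translates into
$$\gamma_0/p_0-\gamma_1/p_1\ge(n-1)(1/p_1-1/p_0),$$
which is precisely the hypothesis in case (\ref{negativos}). In case (\ref{positivos}) one has $1/p_1-1/p_0\ge 0$ and the strict version with factor $n$ in place of $n-1$, which is strictly stronger, so the cross-sum is again met. The remaining conditions of Theorem \ref{teo-radiales}---the two weight sums involving $\beta$, the condition $\max\{\alpha,\beta,\gamma\}>0$, and the scaling identity
$$\frac{1}{q}=1+\frac{1}{p_1}-\frac{1}{p_0}-\frac{\gamma_0/p_0-\gamma_1/p_1+\beta}{n}$$
---can then be arranged by taking $\beta>0$ sufficiently large; the scaling then pins down $q$.

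I would carry this out in the order: (i) reduce the problem to controlling the convolution $\eta*f$; (ii) identify the scaling-invariant sum $\alpha+\gamma$ and observe it matches (\ref{positivos})/(\ref{negativos}) exactly; (iii) exhibit a compatible pair $(\beta,q)$ and verify the remaining admissibility conditions of Theorem \ref{teo-radiales}; (iv) invoke that theorem; (v) handle the endpoint $p_1=\infty$ separately via weighted H\"older's inequality and the rapid decay of $\eta$, using that $\eta\in L(p_0';-\gamma_0/p_0)$ for any Schwartz $\eta$ whenever $\gamma_0>-n$. The main obstacle is the algebraic bookkeeping to check that all constraints of Theorem \ref{teo-radiales} can be simultaneously met, particularly in case (\ref{positivos}), where the ordering $p_0\ge p_1$ makes the scaling and the constraint $1/r\le 1/p+1/q$ the tightest to reconcile; the strict inequality $n(1/p_1-1/p_0)<\gamma_0/p_0-\gamma_1/p_1$ is precisely what provides the slack needed to pick such a $\beta$ there.
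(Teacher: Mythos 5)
Your strategy --- write $f=\eta*f$ and feed the pair $(f,\eta)$ into Theorem \ref{teo-radiales} --- does work, with roughly the bookkeeping you describe, in the sub-case $1<p_0<p_1<\infty$ and $-n<\gamma_0<n(p_0-1)$; that is essentially Case 3 of the paper's proof, which uses the $r=1$ endpoint (Theorem \ref{rigual1}) plus duality (Lemma \ref{lema-dualidad}) in place of the full interpolated theorem. But there are two genuine gaps. First, Theorem \ref{teo-radiales} carries the integrability restriction $\alpha<n/p'$, which under your dictionary reads $\gamma_0<n(p_0-1)$. This is \emph{not} implied by the hypotheses (take $p_0=p_1=2$, $\gamma_0=10n$, $\gamma_1=0$, which satisfies (\ref{positivos})), and since $\alpha=\gamma_0/p_0$ is forced by the dictionary, no choice of the auxiliary pair $(\beta,q)$ can rescue it. The paper avoids this by noting that whenever $\gamma_0/p_0-\gamma_1/p_1\ge 0$ the claim is already the non-radial Plancherel--P\'olya estimate of Meyries--Veraar, and that in the remaining regime one necessarily has $0\le\gamma_0<\gamma_1$ or $\gamma_0<0<n(p_0-1)$; the former is handled by factoring $|f|^{p_1}|x|^{\gamma_1}=\bigl(|f|^{p_0}|x|^{\gamma_0}\bigr)\bigl(|f|^{p_1-p_0}|x|^{\gamma_1-\gamma_0}\bigr)$ and estimating the second factor in sup norm, a step absent from your plan.

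Second, and more seriously, your treatment of the endpoint $p_1=\infty$ fails precisely in the regime that is new here, namely $\gamma_0<0$. H\"older gives
$|f(x)|\le\|f\|_{p_0;\gamma_0/p_0}\left(\int|\eta(x-y)|^{p_0'}|y|^{-\gamma_0p_0'/p_0}\,dy\right)^{1/p_0'}$,
and for $\gamma_0<0$ the weight $|y|^{|\gamma_0|p_0'/p_0}$ grows at infinity, so this integral behaves like $|x|^{|\gamma_0|p_0'/p_0}$ for large $|x|$ and is not uniformly bounded in $x$: finiteness of $\|\eta\|_{p_0';-\gamma_0/p_0}$ does not control its translates, because the weighted norms are not translation invariant. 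This is exactly why the paper's Case 1 replaces $|\eta(x-y)|$ by $T_\gamma\chi_{B(x,1)}(y)$ (Lemma \ref{lemma-frac}) and invokes the radial Stein--Weiss estimate (Theorem \ref{teo-ddd}); the radial symmetry of $f$ is what compensates for the failure of the naive H\"older bound, and Case 2 of the paper reduces to this endpoint, so it cannot be treated as a routine afterthought. A smaller inaccuracy: $\beta$ cannot be taken ``sufficiently large'' --- the scaling identity together with $1<q<\infty$ confines $\beta$ to an interval of length $n$, and in the relevant range one must take $\beta$ near the top of that interval (equivalently $q$ large) rather than arbitrarily large.
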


\begin{remark}
It is immediate to see that the above Theorem is an improvement of \cite[Proposition 4.1]{MV} in the radial case, since that Proposition has the hypothesis $\frac{\gamma_0}{p_0} - \frac{\gamma_1}{p_1}\ge 0$ when $p_0<p_1$.
\end{remark}

\begin{proof}\emph{(Theorem \ref{desig-npp})}
It suffices to consider the case $p_0<p_1$ and $\frac{\gamma_0}{p_0}-\frac{\gamma_1}{p_1}<0$ since otherwise the result is contained in \cite[Proposition 4.1]{MV}. We consider separately $p_1=\infty$ and $p_1<\infty$. Moreover, one can check that in order to have $\frac{\gamma_0}{p_0} - \frac{\gamma_1}{p_1}<0$, necessarily $\gamma_1 \ge \gamma_0$ and equality can only hold if they are both negative. We will divide the proof into the following (possibly overlapping) cases:
\begin{enumerate}
\item $p_1=\infty$,
\item $p_0<p_1<\infty$, $0\le \gamma_0 < \gamma_1$,
\item  $p_0<p_1<\infty$, $-n<\gamma_0 < n(p_0-1)$.
\end{enumerate}

\emph{Case 1:} $p_0 < \infty, p_1 = \infty$. Then, under our assumptions, $0>\gamma_0 > -(n-1)$, so that that there exists $\varepsilon \in (0,1)$ such that $\gamma_0 = -(n-1)(1-\varepsilon)$.

Assume first that $f\in L^\infty(\R^n)$ and let $\eta\in \mathcal{S}(\R^n)$ be such that $supp(\widehat \eta)\subseteq B_2$ and $\widehat\eta=1$ on $B_1$. Then one has $f = f * \eta$ and, by Lemma \ref{lemma-frac},

\[
\begin{array}{ll}
|f(x)|  &\le  \int_{\R^n} |f(y)| |\eta(x-y)| \, dy  \\[0.2 cm]
   &\le \int_{\R^n} |f(y)| |T_\gamma g_x(y)| \, dy \\[0.2 cm]
     &\le \int_{\R^n} |f(y)|^{1-p_0/r} |f(y)|^{p_0/r} |T_\gamma g_x(y)| \, dy \\[0.2 cm]
  &\le \|f\|_\infty^{1-p_0/r} \int_{\R^n} |T_\gamma (f^{p_0/r})(y)| |g_x(y)| \, dy
\end{array}
\]

for a parameter $r$, such that $p_0<r<\infty$ to be chosen later. 
Then, using H\"older's inequality, the fact that for any $q\in [1, \infty]$, $\|g_x\|_{q'} < C$ and Theorem \ref{teo-ddd},
\[
\begin{array}{ll}
\|f\|_{L^\infty(\R^n)}  &\le \|f\|_\infty^{1-p_0/r} \|T_\gamma (f^{p_0/r})\|_q \|g_x\|_{q'} \\[0.2 cm]
 &\le  C\|f\|_\infty^{1-p_0/r} \| |f|^{p_0/r} \|_{r; \gamma_0/r} \\[0.2 cm]
 &\le  C\|f\|_\infty^{1-p_0/r} \|f \|_{p_0; \gamma_0/p_0}^{p_0/r}
\end{array}
\]

provided that 

$\frac{1}{q}=\frac{1}{r}+\frac{\gamma_0/r + \gamma}{n}-1$, $q \ge r$, $0<\gamma<n$, $\frac{\gamma_0}{r} < \frac{n}{r'}$, $\frac{\gamma_0}{r} \ge (n-1)(\frac{1}{q}-\frac{1}{r})$. 

At this point, we choose $q$ and $r$ such that $p_0 < r < q$ and $\varepsilon = \frac{r}{q}$, so that $\gamma_0 = (n-1)(\frac{r}{q}-1)$. 

Therefore, $\frac{\gamma_0}{r}=  (n-1)(\frac{1}{q}-\frac{1}{r})$  which proves condition $\frac{\gamma_0}{r} \ge (n-1)(\frac{1}{q}-\frac{1}{r})$.

Condition $\frac{\gamma_0}{r} < \frac{n}{r'}$ follows trivially from the fact that $\gamma_0 <0$. 

From the scaling condition and the fact that $\frac{\gamma_0}{r}= (n-1)(\frac{1}{q}-\frac{1}{r})$, it follows that $n-\gamma = \frac{1}{r}-\frac{1}{q}$. Hence, $n-\gamma >0$ follows from $q>r$ and $n-\gamma<n$ follows from $\frac{1}{r}-\frac{1}{q} = \frac{1}{r}(1-\varepsilon) < n$.

\bigskip 

\emph{Case 2:} $p_0<p_1<\infty$, $0\le \gamma_0 <\gamma_1$. Then, by case 1,
\[ 
\begin{array}{ll} 
\|f\|_{p_1; \gamma_1/p_1} &= \|f\|_{p_0; \gamma_0/p_0}^{p_0/p_1} \||f|^{p_1 -p_0} |x|^{\gamma_1 - \gamma_0}\|_\infty^{1/p_1}\\
& \le C \|f\|_{p_0; \gamma_0/p_0}^{p_0/p_1} \| |f|^{p_1 -p_0} |x|^{\gamma_1 - \gamma_0}\|_{q; \gamma}^{1/qp_1}
\end{array} 
\]
with  $q=\frac{p_0}{p_1-p_0}$ and $\gamma= \gamma_0-\frac{p_0}{p_1-p_0}(\gamma_1 - \gamma_0)$, provided $\gamma \ge -(n-1)$, which follows from the fact that $\frac{\gamma_0}{p_0}- \frac{\gamma_1}{p_1} \ge (n-1) (\frac{1}{p_0}-\frac{1}{p_1})$.
Hence, by the choice of $q$ and $\gamma$,
\begin{equation}
\|f\|_{p_1; \gamma_1/p_1} \le C \|f\|_{p_0; \gamma_0/p_0}^{p_0/p_1} \|f\|_{p_0; \gamma_0/p_0}^{1/p_0-1/p_1} \le C \|f\|_{p_0; \gamma_0/p_0}
\end{equation}
where the last inequality follows from $p_0<p_1$.

\bigskip 

\emph{Case 3:} $p_0 < p_1 < \infty$, $-n <\gamma_0< n (p_0-1)$. By Theorem \ref{rigual1} and Lemma \ref{lema-dualidad} we have
\[ 
\begin{array}{ll} 
\|f\|_{p_1; \gamma_1/p_1} &= \|f* \eta\|_{p_1; \gamma_1/p_1} \\[0.2 cm]
& \le C \| |x|^a \eta\|_\infty \|f\|_{p_0; \gamma_0/p_0} \\[0.2 cm]
&\le C \|f\|_{p_0; \gamma_0/p_0}
\end{array} 
\]
provided that
$\frac{1}{p_1} = \frac{1}{p_0} + \frac{a + \gamma_0/p_0 - \gamma_1/p_1}{n}-1$, $-\gamma_1<n$, $\frac{\gamma_0}{p_0}<\frac{n}{p_0'}$, $\frac{1}{p_1}\le \frac{1}{p_0}$, $0<a<n$, and $\frac{\gamma_0}{p_0}-\frac{\gamma_1}{p_1} \ge (n-1)(\frac{1}{p_1}-\frac{1}{p_0})$. 
All the conditions are trivially satisfied except $0<a<n$ which, because of the scaling condition, is equivalent to $0<\frac{n+\gamma_1}{p_1} - \frac{n+\gamma_0}{p_0} + n < n$.
The RHS of the inequality follows from the fact that, under our assumptions,  $\frac{n+\gamma_0}{p_0}\ge \frac{n+\gamma_1}{p_1} + \frac{1}{p_0} - \frac{1}{p_1} > \frac{n+\gamma_1}{p_1}$. 
The LHS of the inequality follows from the fact that $\frac{n+\gamma_1}{p_1} >0$ and that $n - \frac{n+\gamma_0}{p_0} >0$ because we are assuming $\gamma_0 < n(p_0-1)$.

\end{proof}

\begin{corollary}
\label{teo-derivadas}
Let $1<p_0, p_1 \le \infty$. Let $\gamma_0,\gamma_1 >-n$. Then, there exists $C>0$ such that for every radial function $f:\mathbb{R}^n\to \mathbb{R}$ satisfying $supp(\widehat f)\subseteq B_t$ for some $t>0$, there holds 
$$
\| f\|_{p_1; \gamma_1/p_1} \le C t^{\delta} \|f\|_{p_0; \gamma_0/p_0}  
$$
provided  (\ref{positivos}) or (\ref{negativos}) hold, and
\begin{equation}
\label{def-delta}
\delta:=\frac{n+\gamma_0}{p_0}-\frac{n+\gamma_1}{p_1}.
\end{equation}
\end{corollary}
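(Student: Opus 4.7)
The plan is to deduce the scaled inequality from Theorem \ref{desig-npp} by a standard dilation argument. First I would reduce to the case $t=1$ by introducing the dilate $g(x) := f(x/t)$. Since Fourier transform intertwines dilation as $\widehat{g}(\xi) = t^n \widehat{f}(t\xi)$, the hypothesis $\mathrm{supp}(\widehat f) \subseteq B_t$ yields $\mathrm{supp}(\widehat g) \subseteq B_1$. Moreover, radiality is preserved under dilation, so $g$ satisfies the hypotheses of Theorem \ref{desig-npp}, and the conditions \eqref{positivos} or \eqref{negativos} (which are assumed for the pair $(p_0,\gamma_0,p_1,\gamma_1)$) give
$$
\|g\|_{p_1;\gamma_1/p_1} \le C \|g\|_{p_0;\gamma_0/p_0}.
$$

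Next I would compute how each weighted $L^p$ norm scales. For any $p\in[1,\infty)$ and $\gamma>-n$, changing variables $y=x/t$ (with $dx = t^n\,dy$) yields
$$
\|g\|_{p;\gamma/p} = \left( \int_{\R^n} |f(x/t)|^p |x|^{\gamma}\,dx \right)^{1/p} = t^{(n+\gamma)/p} \|f\|_{p;\gamma/p},
$$
and an analogous identity holds in the case $p=\infty$ with exponent $\gamma/p$ interpreted in the obvious way. Applying this identity on both sides of the inequality for $g$ and dividing by $t^{(n+\gamma_1)/p_1}$, I obtain
$$
\|f\|_{p_1;\gamma_1/p_1} \le C\, t^{(n+\gamma_0)/p_0 - (n+\gamma_1)/p_1} \|f\|_{p_0;\gamma_0/p_0} = C\, t^{\delta} \|f\|_{p_0;\gamma_0/p_0},
$$
with $\delta$ as in \eqref{def-delta}, which is exactly the desired inequality.

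I do not foresee any real obstacle here; the entire argument is a routine scaling reduction once one has the base case of Theorem \ref{desig-npp} at $t=1$. The only minor point to double check is that the change of variables gives the correct exponent in $t$, and that dilation maps radial functions to radial functions (trivially true), so that the hypothesis of radiality needed to apply Theorem \ref{desig-npp} is preserved.
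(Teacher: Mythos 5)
Your argument is correct and is essentially the paper's own proof: the authors use the normalized dilate $f_t(x)=t^{-n}f(x/t)$ where you use $f(x/t)$, but the constant factor $t^{-n}$ cancels in the homogeneous inequality, and the scaling computation $\|f(\cdot/t)\|_{p;\gamma/p}=t^{(n+\gamma)/p}\|f\|_{p;\gamma/p}$ yielding the exponent $\delta=\frac{n+\gamma_0}{p_0}-\frac{n+\gamma_1}{p_1}$ is exactly the intended one. Nothing further is needed.
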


\begin{proof} Note that if $supp\,\hat f\subset B_t$ then $f_t(x):=t^{-n}f(x/t)$ satisfies $\hat f_t(\xi)=\hat f(t\xi)$ and $supp\,\hat f_t\subset B_1$. Therefore, 
by Theorem \ref{desig-npp},
 $$ \|f_t\|_{p_1; \gamma_1/p_1} \le C \|f_t\|_{p_0; \gamma_0/p_0} $$ 
that is, 
$$ \|f\|_{p_1; \gamma_1/p_1} \le Ct^\delta \|f\|_{p_0; \gamma_0/p_0} $$ 
\end{proof}

\begin{corollary}
\label{coro}
Let $1<p_0, p_1 \le \infty$ and $\gamma_0,\gamma_1 >-n$ . 
Assume that
$$\delta \ge \max\left\{0 , \frac{1}{p_0}-\frac{1}{p_1}\right\}$$
and $\delta \neq 0$, where $\delta$ is defined by (\ref{def-delta}).
Then 
$$B^{s+\delta}_{rad}(p_0, q, \gamma_0/p_0)  \hookrightarrow B^s_{rad}(p_1, q, \gamma_1/p_1).$$
\end{corollary}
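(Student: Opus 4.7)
The plan is the standard Littlewood-Paley approach: given $f \in B^{s+\delta}_{rad}(p_0,q;\gamma_0/p_0)$, decompose $f = \sum_{k\ge 0} S_k f$ and estimate each dyadic piece $S_k f$ in the target space using the scaled Nikol'skij--Plancherel--P\'olya inequality from Corollary \ref{teo-derivadas}. Two preliminary observations are needed. First, because $\widehat{\varphi}$ depends only on $|\xi|$, the functions $\varphi_k$ are radial, so $S_k f = \varphi_k * f$ is radial whenever $f$ is. Second, by construction $\mathrm{supp}(\widehat{S_k f}) \subseteq \{|\xi| \le \tfrac{3}{2}\cdot 2^k\}$, so Corollary \ref{teo-derivadas} applies to $S_k f$ with $t \asymp 2^k$.

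Next I would verify that the hypothesis $\delta \ge \max\{0, 1/p_0 - 1/p_1\}$ together with $\delta \ne 0$ exactly encodes conditions (\ref{positivos}) and (\ref{negativos}) of Theorem \ref{desig-npp}. When $p_0 \ge p_1$, one has $\max\{0, 1/p_0 - 1/p_1\} = 0$, and a direct computation shows $\delta > 0$ is equivalent to
\[
\frac{\gamma_0}{p_0} - \frac{\gamma_1}{p_1} > n\left(\frac{1}{p_1} - \frac{1}{p_0}\right),
\]
which is (\ref{positivos}). When $p_0 < p_1$, rearranging
\[
\delta = \frac{n+\gamma_0}{p_0} - \frac{n+\gamma_1}{p_1} \ge \frac{1}{p_0}-\frac{1}{p_1}
\]
yields precisely
\[
\frac{\gamma_0}{p_0} - \frac{\gamma_1}{p_1} \ge (n-1)\left(\frac{1}{p_1} - \frac{1}{p_0}\right),
\]
which is (\ref{negativos}); and in this case $\delta \neq 0$ is automatic.

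With these checks in hand, Corollary \ref{teo-derivadas} applied to each $S_k f$ (which lies in the Schwartz class, so the norm on the right-hand side is finite) gives
\[
\|S_k f\|_{p_1;\gamma_1/p_1} \le C\,2^{k\delta}\,\|S_k f\|_{p_0;\gamma_0/p_0}, \qquad k \ge 0,
\]
with a constant $C$ independent of $k$ and $f$ (the $k=0$ term is handled by the same estimate, since $\mathrm{supp}(\widehat{\varphi}_0) \subseteq B_{3/2}$ and the rescaling factor is $O(1)$). Raising to the $q$-th power, multiplying by $2^{ksq}$, and summing in $k$ gives
\[
\|f\|_{B^s_{rad}(p_1,q;\gamma_1/p_1)}^q = \sum_{k\ge 0} 2^{ksq}\|S_k f\|_{p_1;\gamma_1/p_1}^q \le C^q \sum_{k\ge 0} 2^{k(s+\delta)q}\|S_k f\|_{p_0;\gamma_0/p_0}^q = C^q \|f\|_{B^{s+\delta}_{rad}(p_0,q;\gamma_0/p_0)}^q,
\]
with the usual $\sup$ replacing the sum when $q=\infty$. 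This is the desired embedding.

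The main obstacle is really the bookkeeping in the preceding paragraph: matching the two algebraic forms of the hypothesis with (\ref{positivos})/(\ref{negativos}) and checking that all the remaining side conditions of Theorem \ref{desig-npp} (notably $\gamma_0,\gamma_1 > -n$ and $1 < p_0, p_1 \le \infty$) carry over unchanged. Once this is done, the dyadic summation argument is routine, and the improvement over \cite[Theorem 6.4]{MV} comes solely from the sharper exponent range in Theorem \ref{desig-npp}.
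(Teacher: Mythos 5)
Your proof is correct and follows essentially the same route as the paper: radiality of $\varphi_k$ and hence of $S_k f$, the support bound $\mathrm{supp}(\widehat{S_k f})\subseteq B_{C2^k}$, the scaled Nikol'skij--Plancherel--P\'olya estimate of Corollary \ref{teo-derivadas} on each dyadic block, and summation in $k$. Your explicit verification that $\delta\ge\max\{0,\tfrac{1}{p_0}-\tfrac{1}{p_1}\}$ with $\delta\neq 0$ encodes conditions (\ref{positivos}) and (\ref{negativos}) is accurate and makes explicit a step the paper leaves implicit.
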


\begin{proof} 
Notice that since $\varphi$ is taken to be radial, then the $\varphi_k$ are also radial.

Let $f \in B^{s+\delta}_{rad}(p_0, q, \gamma_0/p_0)$. By density, we can assume that $f \in\mathcal{S}$ and is radial,  whence 
 $\widehat{f*\varphi_k}=\hat f \hat\varphi_k$ and $f*\varphi_k$ are also radial. 
 
Since $supp\,\widehat{f*\varphi_k} \subset supp\,\hat\varphi_k\subset B(C2^k)$, by Corollary \ref{teo-derivadas} we have
$$ \|f*\varphi_k\|_{p_1; \gamma_1/p_1} \le C2^{k\delta} \|f*\varphi_k\|_{p_0; \gamma_0/p_0} $$ 
and, therefore,  
\[
\begin{array}{ll}
 \|f\|^q_{B^s(p_1,q, \gamma_1/p_1)} &=\sum_{k\ge 0} 2^{qks}\|f*\varphi_k\|^q_{p_1; \gamma_1/p_1} \\[0.2 cm]
  & \le C\sum_{k \ge 0} 2^{qk(s+\delta)}\|f*\varphi_k\|^q_{p_1; \gamma_1/p_1} \\[0.2 cm]
  & = C\|f\|^q_{B^{s+\delta}(p_1,q, \gamma_1/p_1)} 
  \end{array}
   \]

\end{proof}

\begin{theorem} 
\label{teo-inmersion}
Let $1<p_0, p_1 \le \infty$, $s_0,s_1\in\mathbb{R}$, and $\gamma_0,\gamma_1 >-n$. Then 
$$
B^{s_0}_{rad}(p_0, q_0, \gamma_0/p_0) \hookrightarrow B^{s_1}_{rad}(p_1, q_1, \gamma_1/p_1)
$$
provided  that

$$  \delta\ge \max \left\{0,\frac{1}{p_0}-\frac{1}{p_1}\right\}, $$ 
and  
\begin{itemize}
\item either $s_0-s_1> \delta$
\item or $s_0-s_1= \delta$ and $q_1\ge q_0$
\end{itemize} 
where $\delta$ is as in (\ref{def-delta}) and the case $\delta=0$ is only admissible if $\gamma_0=\gamma_1$ and $p_0=p_1$.
\end{theorem}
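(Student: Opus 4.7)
The strategy is to assemble the claimed embedding from two ingredients already at hand: Corollary \ref{coro}, which handles the ``diagonal'' case $s_0-s_1=\delta$ with a common microscopic index $q$, and Lemma \ref{embedding}, which allows an arbitrary change of the microscopic index $q$ at the cost of a small amount of smoothness. Within this framework the argument splits into two cases according to whether $s_0-s_1=\delta$ or $s_0-s_1>\delta$, each with a sub-case according to $\delta>0$ or $\delta=0$ (the latter forced by hypothesis to occur only when $p_0=p_1$ and $\gamma_0=\gamma_1$).

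Consider first the equality case $s_0-s_1=\delta$ with $q_1\ge q_0$. When $\delta>0$, Corollary \ref{coro} applied with microscopic index $q=q_0$ and smoothness $s_1$ directly gives
\[
B^{s_0}_{rad}(p_0,q_0,\gamma_0/p_0)\hookrightarrow B^{s_1}_{rad}(p_1,q_0,\gamma_1/p_1).
\]
Passing from $q_0$ to $q_1\ge q_0$ is then nothing more than the elementary $\ell^{q_0}\hookrightarrow\ell^{q_1}$ inclusion applied to the defining sequence $(2^{ks_1}\|S_kf\|_{p_1;\gamma_1/p_1})_{k\ge 0}$. When $\delta=0$, the hypothesis forces $p_0=p_1$ and $\gamma_0=\gamma_1$, so the two Besov spaces coincide except in the microscopic index and the same discrete $\ell^q$ embedding suffices.

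Next, consider the strict case $s_0-s_1>\delta$, and set $\varepsilon:=s_0-s_1-\delta>0$. If $\delta>0$, Corollary \ref{coro} applied with smoothness $s_1+\varepsilon$ yields
\[
B^{s_0}_{rad}(p_0,q_0,\gamma_0/p_0)=B^{(s_1+\varepsilon)+\delta}_{rad}(p_0,q_0,\gamma_0/p_0)\hookrightarrow B^{s_1+\varepsilon}_{rad}(p_1,q_0,\gamma_1/p_1),
\]
and Lemma \ref{embedding} then provides
\[
B^{s_1+\varepsilon}_{rad}(p_1,q_0,\gamma_1/p_1)\hookrightarrow B^{s_1}_{rad}(p_1,q_1,\gamma_1/p_1),
\]
completing the chain. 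If $\delta=0$ (so again $p_0=p_1$, $\gamma_0=\gamma_1$), the conclusion is a direct application of Lemma \ref{embedding} with the $\varepsilon=s_0-s_1>0$ of excess smoothness.

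Since all the genuine analytic content has already been absorbed into Corollary \ref{coro}, and ultimately into Theorem \ref{desig-npp}, no substantive obstacle remains. The only bookkeeping required is to check that the parameter restrictions of Corollary \ref{coro}, namely $\delta\ge\max\{0,\tfrac{1}{p_0}-\tfrac{1}{p_1}\}$ together with $\delta\ne 0$, are exactly those imposed by the present theorem (with the $\delta=0$ exception accounting for the coincidence $p_0=p_1$, $\gamma_0=\gamma_1$), so the assembly above is legitimate in every case.
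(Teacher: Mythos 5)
Your proof is correct and follows essentially the same route as the paper: the equality case $s_0-s_1=\delta$ reduces to Corollary \ref{coro} (together with the trivial monotonicity of the $\ell^q$ norms in $q$, which you rightly make explicit), and the strict case absorbs the excess smoothness $\varepsilon=s_0-s_1-\delta$ by combining Corollary \ref{coro} with Lemma \ref{embedding}. The only cosmetic difference is that you dispose of the degenerate case $\delta=0$, $p_0=p_1$, $\gamma_0=\gamma_1$ by elementary inclusions, whereas the paper cites the non-radial result of Meyries and Veraar for it.
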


\begin{proof}
In the case $\delta=0, \gamma_0=\gamma_1, p_0=p_1$, the result is true even in the non-radial case, and is contained in \cite[Theorem 1.1]{MV}.

If $s_0 - s_1= \delta$ and $q_1 \ge q_0$, the theorem reduces to Corollary \ref{coro}.

If $s_0 - s_1 > \delta$, let  $\varepsilon>0$ satisfy $s_0 - s_1  = \delta + \varepsilon$. By the previous case and Lemma \ref{embedding}, we obtain the chain of continuous embeddings
$$
B^{s_0+\varepsilon}_{rad}(p_0, q_0, \gamma_0/p_0) \hookrightarrow B^{s_0}_{rad}(p_0, q_1, \gamma_0/p_0) \hookrightarrow B^{s_1}_{rad}(p_1, q_1, \gamma_1/p_1)
$$ 
which concludes the proof. 

\end{proof}

\bigskip
\textbf{Acknowledgements}
The authors thank Carlos D'Andrea for reference \cite{B}. We are also indebted to the anonymous referee for helpful comments and suggestions.

\end{document}